\definecolor{keywordcolor}{rgb}{0.7, 0.1, 0.1}   
\definecolor{commentcolor}{rgb}{0.4, 0.4, 0.4}   
\definecolor{symbolcolor}{rgb}{0.0, 0.1, 0.6}    
\definecolor{sortcolor}{rgb}{0.1, 0.5, 0.1}      
\newcommand{\bbn}{\mathbb{N}}
\newcommand{\bbz}{\mathbb{Z}}
\newcommand{\abs}[1]{\left\lvert #1\right\rvert}
\newcommand{\brac}[1]{\left( #1\right)}
\newtheorem{theorem}{Theorem}
\newtheorem{lemma}{Lemma}
\newtheorem{corollary}{Corollary}
\newtheorem{proposition}{Proposition}
\newcommand{\mathlib}{\textsf{mathlib}}
\let\@wraptoccontribs\wraptoccontribs
\begin{document}

\title{On a density conjecture about unit fractions}
\author{Thomas F. Bloom}
\email{bloom@maths.ox.ac.uk}
\address{Mathematical Institute\\Woodstock Road\\Oxford\\OX2 6GG, United Kingdom}
\contrib[with an appendix co-written by]{Bhavik Mehta}

\begin{abstract}
We prove that any set $A\subset \bbn$ of positive upper density contains a finite $S\subset A$ such that $\sum_{n\in S}\frac{1}{n}=1$, answering a question of Erd\H{o}s and Graham.
\end{abstract}

\maketitle

\section{Introduction}

A classical topic in the additive theory of unit fractions is the study of those finite sets $S\subset \bbn$ such that $\sum_{n\in S}\frac{1}{n}=1$. Erd\H{o}s and Graham \cite{ErGr1980} conjectured that if the integers are coloured by a finite number of colours then there must be such a set which is monochromatic (aside from the trivial $S=\{1\}$). This conjecture was proved by Croot \cite{Cr2003}. 
\begin{theorem}[Croot \cite{Cr2003}]\label{th-croot}
If the integers $\{2,3,\ldots\}$ are coloured in finitely many colours there is a monochromatic set $S$ such that $\sum_{n\in S}\frac{1}{n}=1$.
\end{theorem}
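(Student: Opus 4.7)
The plan is to reduce the colouring statement to a density-type statement and then attack the latter via a smooth-number flexibility argument. Since $\sum_{n\geq 2} \frac{1}{n}=\infty$, the pigeonhole principle applied to the finitely many colour classes guarantees at least one class $A$ with $\sum_{n\in A}\frac{1}{n}=\infty$. It therefore suffices to prove that every $A\subset\{2,3,\ldots\}$ with divergent harmonic sum contains a finite $S\subset A$ with $\sum_{n\in S}\frac{1}{n}=1$.

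For this density statement I would first locate dyadic windows $[N,2N]$ in which $A$ is numerous; such windows exist at arbitrarily large scales, since $\sum_{n\in A}\frac{1}{n}=\infty$ forces $\Abs{A\cap[N,2N]}\gg N/\log N$ infinitely often. Within such a window I would restrict attention to $y$-smooth integers in $A$ (divisible only by primes up to a slowly growing parameter $y$), exploiting the fact that smooth integers of size $\sim N$ remain numerous and that their reciprocals generate a very flexible additive structure in $\mathbb{Q}$.

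The heart of the argument is a flexibility lemma of roughly the form: if $T\subset A\cap[N,2N]$ is a sufficiently dense set of $y$-smooth integers, then every rational $p/q$ in a wide range, with $q$ composed only of primes $\leq y$, can be realised as $\sum_{n\in S}\frac{1}{n}$ for some $S\subset T$. Granted this, I would pick disjoint dyadic windows and iteratively correct towards the target $1$: form an initial partial sum $\sigma_0<1$ from one window, then realise the deficit $1-\sigma_0$ as a sum of reciprocals drawn from subsequent, finer windows, verifying that the deficit always lies in the admissible range.

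The main obstacle is the flexibility lemma itself. Approximate sums inside $[N,2N]$ come essentially for free from density considerations, but exactly hitting a prescribed rational requires ruling out every numerical obstruction to the existence of $S$. I expect this to demand a careful analysis of the subgroup of $\mathbb{Q}/\mathbb{Z}$ generated by reciprocals of the relevant smooth integers, together with a counting argument showing that the elements of $T$ populate enough residue classes to realise every admissible target. Everything else---pigeonhole, dyadic decomposition, smooth-number counting---should be comparatively routine once this lemma is in place.
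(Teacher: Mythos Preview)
Your reduction contains a fatal gap: the intermediate statement you aim to prove, namely that \emph{every} $A\subset\{2,3,\ldots\}$ with $\sum_{n\in A}\frac{1}{n}=\infty$ contains a finite $S$ with $\sum_{n\in S}\frac{1}{n}=1$, is false. The set of primes is a counterexample: $\sum_p \frac{1}{p}=\infty$, yet no finite set of distinct primes can have reciprocals summing to $1$ (clearing denominators, each prime divides every term of the numerator but one). This is not a technicality to be repaired later; your entire strategy is aimed at a false target. The subsequent step of passing to $y$-smooth elements of $A$ inherits the same problem: for $A$ equal to the primes there are no $y$-smooth elements at all, so the ``flexibility lemma'' has nothing to act on. Your density claim in step~3 is also incorrect as stated (take $\lvert A\cap[2^k,2^{k+1}]\rvert\asymp 2^k/(k\log k)$), but this is minor compared to the main issue.

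The way out is to use more of the colouring structure than just ``some class has divergent reciprocal sum''. One option, taken in this paper, is to observe that some colour class has positive upper \emph{density} and then prove the stronger density statement (Theorem~\ref{th-dens}); this requires handling integers that are only $n^{1-o(1)}$-smooth, which is the main technical advance here. Croot's original route is closer in spirit to what you sketch, but the crucial point is that the $n^{1/4-o(1)}$-smooth integers themselves have positive density, so in any finite colouring one colour class must contain a large supply of them; one then runs a circle-method argument on that class of smooth numbers. In either approach the smoothness is obtained from the ambient integers being coloured, not from an arbitrary $A$ with divergent sum.
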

Colouring statements often have a natural strengthening to a corresponding density statement, which Erd\H{o}s and Graham also conjectured\footnote{This conjecture appears on \cite[p. 36]{ErGr1980} and \cite[p. 298]{Gr2013}, for example, as well as in several unpublished notes and letters of Erd\H{o}s as reproduced in \cite{Gr2013}. This conjecture has also been made by Sun \cite{Su2007}.} in this case: that any set of positive density contains some finite $S$ with $\sum_{n\in S}\frac{1}{n}=1$. Recall that, for any $A\subset \bbn$, the lower and upper densities of $A$ are defined to be
\[\underline{d}(A) = \liminf_{N\to\infty}\frac{\abs{A\cap [1,N]}}{N}\quad\textrm{and}\quad\overline{d}(A) = \limsup_{N\to\infty}\frac{\abs{A\cap [1,N]}}{N}\]
respectively. When these limits are equal, $\underline{d}(A)=\overline{d}(A)=d$, we say that $A$ has density $d$ (and say that $A$ has positive density if $d>0$).

In this paper we answer this question of Erd\H{o}s and Graham in the affirmative in a strong sense, proving the following criterion which uses only positive upper density.
\begin{theorem}\label{th-dens}
If $A\subset \bbn$ has positive upper density (and hence certainly if $A$ has positive density) then there is a finite $S\subset A$ such that $\sum_{n\in S}\frac{1}{n}=1$.
\end{theorem}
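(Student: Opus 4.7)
The strategy is to exploit the dyadic structure of $[1,N]$: since $\overline{d}(A)>0$, there is some $\delta>0$ and arbitrarily large $N$ for which the dyadic piece $A\cap [N,2N)$ contains at least $\delta N$ elements, and hence $\sum_{n\in A\cap [N,2N)}1/n \geq \delta/2$. This mass is much larger than the target $1$, so the heuristic is clear: we have plenty of room, we just need to carve out a subset hitting $1$ \emph{exactly}. The plan is to do this by a two-step reduction -- first reach a rational close to $1$ with a small smooth denominator, then clean up the tail using a Croot-style smooth subset argument.

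First I would pass to a well-structured subset. By pigeonhole on residue classes modulo a smooth modulus $q$ (say $q=\mathrm{lcm}(1,\dots,y)$ for a slowly growing $y=y(N)$), find $A'\subset A\cap[N,2N)$ lying in a single progression $a\pmod q$ with $\abs{A'}\gg \delta N/q$. Then choose a smooth integer $Q$ much larger than $N$ such that $Q/n\in\bbz$ for every $n\in A'$; hitting $\sum_{n\in S}1/n=1$ becomes the integer subset-sum problem of hitting $Q$ with elements of the multiset $\{Q/n:n\in A'\}$. These integers all lie in a controlled dyadic range, and by the mass estimate their full sum is much bigger than $Q$, so the target is comfortably inside the feasible window.

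The main step is then to show the subset sums of $\{Q/n:n\in A'\}$ cover a whole interval of integers around $Q$. I would attack this via an $L^2$/Fourier argument on $\bbz/R\bbz$ for an appropriate modulus $R$: bound the exponential sums $\sum_{n\in A'}e(\xi Q/n)$ away from the trivial frequencies using the smoothness of $q$ and $Q$ (so that $Q/n$ is nicely distributed in residues), and combine with a Plünnecke/doubling-type step for subset sums to show that after iterating, every integer in a long interval around $Q$ is achievable. Because $1$ is an interior point of this interval (by the density mass computation), we obtain the desired $S$.

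The key obstacle -- and the technical heart of the argument -- is the Fourier step: in the coloring setting of Theorem~\ref{th-croot} one can pigeonhole to a structured piece (an AP of smooth numbers) and reduce to near-trivial exponential sums, but under only a density hypothesis the set $A'$ can be essentially arbitrary within its residue class, so one needs a robust subset-sums result that works for sparse sets of integers with no assumed multiplicative structure beyond what divisibility by $q$ provides. I expect the proof to require a careful iterative scheme that alternates between extracting progressively smoother subsets of $A$ (to control denominators) and applying $L^2$ increments (to hit prescribed residues), with the parameters $y$, $q$, $Q$, $R$ all balanced against $\delta$ and $N$.
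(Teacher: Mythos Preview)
Your proposal has a fatal arithmetic error at the very first step. You write that $\sum_{n\in A\cap[N,2N)}1/n\geq \delta/2$ and then assert ``this mass is much larger than the target $1$''. But $\delta\leq 1$ is a density, so $\delta/2\leq 1/2$. In fact the \emph{entire} dyadic block satisfies $\sum_{N\leq n<2N}1/n<\log 2<1$, so there is no $S\subset[N,2N)$ whatsoever with $R(S)=1$. Everything downstream collapses: when you write ``by the mass estimate their full sum is much bigger than $Q$'', that full sum is $Q\cdot R(A')<Q$, so the target $Q$ is not even in the feasible window for your subset-sum problem, and no amount of Fourier analysis can manufacture a solution that does not exist.

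The paper's route around this obstruction is the point of the whole argument. Rather than seeking $R(S)=1$ inside one localized piece, it aims only for $R(S)=1/d$ with $d$ in a controlled range $[y,z]$ where $y\asymp 1/\delta$; this is feasible because one only needs $R(A)\gtrsim 2/y\asymp\delta$, which a positive-density block does provide. One then exploits that removing a finite $S$ leaves the upper density unchanged, iterates to produce many disjoint such $S$, and pigeonholes on $d$ to assemble $d$ of them into a set with $R=1$. The price is that to find $R(S)=1/d$ one needs $d$ to divide the lcm of the working set, which forces the divisibility condition (every $n\in A$ has two small divisors $d_1,d_2$ with $y\leq d_1$ and $4d_1\leq d_2\leq z$) and the accompanying sieve lemma showing this holds for almost all integers. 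The Fourier/circle-method component you gesture at is indeed present, but it is applied to detect $R(S)=1/k$ for these small $k$, not $R(S)=1$, and only after an iterative pruning (Propositions~\ref{prop-tech} and Lemmas~\ref{lem-pisqa}--\ref{lem-pisq}) that forces the minor-arc hypothesis to hold.
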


Given that we are working with sums of the shape $\sum\frac{1}{n}$, it is perhaps more natural to consider the logarithmic density. For any $A\subset \bbn$, the lower and upper logarithmic densities of $A$ are defined to be
\[\underline{\delta}(A) = \liminf_{N\to\infty}\frac{1}{\log N}\sum_{n\in A\cap [1,N]}\frac{1}{n}\quad\textrm{and}\quad\overline{\delta}(A) = \limsup_{N\to\infty}\frac{1}{\log N}\sum_{n\in A\cap [1,N]}\frac{1}{n}.\]
A straightforward application of partial summation shows that $\overline{\delta}(A)\leq \overline{d}(A)$, and hence the analogue of Theorem~\ref{th-dens} with upper logarithmic density is an immediate corollary. Our methods in fact deliver the following more precise quantitative result.
\begin{theorem}\label{th-quant}
There is a constant $C>0$ such that the following holds. If $A\subset \{1,\ldots,N\}$ and 
\[\sum_{n\in A}\frac{1}{n}\geq C \frac{\log \log\log N}{\log\log N}\log N\]
then there is an $S\subset A$ such that $\sum_{n\in S}\frac{1}{n}=1$.
\end{theorem}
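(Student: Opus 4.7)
The plan is to employ a Croot-style density-of-subset-sums argument, after reducing to a well-structured dense subset. The quantitative threshold $C(\log\log\log N/\log\log N)\log N$ is chosen so that we can afford to lose a factor of size $\log\log N/\log\log\log N$ at a few key pigeonholing steps, while still having enough mass left for the final Fourier-analytic step to go through.

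First I would dyadically decompose $[1,N]$ into intervals $[M,2M]$ and pigeonhole on the logarithmic mass to locate a scale $M$ at which $\sum_{n\in A\cap[M,2M]}\tfrac{1}{n}\gtrsim \log\log\log N/\log\log N$, so that $A\cap[M,2M]$ has relative density $\gtrsim \log\log\log N/\log\log N$ inside $[M,2M]$. Next I would restrict to integers with a large smooth part: standard smooth-number estimates (or a direct pigeonhole on which ``rough'' prime factor dominates) let me pass to a subset $A'$ whose elements all share a $y$-smooth divisor $d$ of size roughly $M^{1/2}$ for some $y=(\log N)^{O(1)}$, at the cost of only a constant factor in density. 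A further pigeonhole on the residue of $n/d$ modulo $Q=\prod_{p\leq y}p$ concentrates $A'$ into a single residue class, producing a set $A''$ of integers in a common residue class modulo $dQ$.

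The heart of the proof is the following claim: the set of subset sums $\bigl\{\sum_{n\in S}\tfrac{1}{n}:S\subset A''\bigr\}$ is so dense in a short interval around $\tfrac12\sum_{n\in A''}\tfrac{1}{n}$ that it contains every rational of denominator at most $Q$ in that interval. Splitting $A''$ into two halves and tuning the target within each half then lets one aim exactly at $1$. The density-of-sums claim is a Fourier-analytic statement on $\mathbb{R}/\mathbb{Z}$: the characteristic function of the random-subset-sum distribution factors essentially as $\prod_{n\in A''}\cos(\pi\theta/n)$, and one needs sharp bounds on this product away from the ``major arcs'' consisting of rationals with denominator dividing $Q$, followed by an inverse Fourier transfer converting such $L^\infty$ bounds into a density statement about subset sums.

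The main obstacle is precisely this minor-arc estimate: since the points $1/n$ for $n\in A''$ are highly non-uniformly distributed, extracting cancellation from $\sum_{n\in A''}e(\theta/n)$ on $\theta$ incompatible with $Q$ is genuinely delicate and must exploit the common smooth-divisor structure of $A''$ (via Weyl-type differencing or a direct large-sieve bound in the reciprocal variable). All other steps—the dyadic localisation, the passage to smooth elements, the residue-class pigeonhole, and the two-halves splitting trick—are essentially standard bookkeeping, but must be carefully accounted for to verify that the losses fit within the hypothesised mass $C(\log\log\log N/\log\log N)\log N$.
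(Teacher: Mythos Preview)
Your proposal has a genuine gap at the smooth-divisor step. You claim that one can pass, at the cost of only a constant factor in density, to a subset $A'\subset A\cap[M,2M]$ whose elements all share a fixed $y$-smooth divisor $d$ of size roughly $M^{1/2}$ with $y=(\log N)^{O(1)}$. This is false on its face: for a fixed $d\asymp M^{1/2}$ the multiples of $d$ in $[M,2M]$ have density $\asymp M^{-1/2}$, so pigeonholing onto any single $d$ costs a factor of order $M^{1/2}$, not $O(1)$. The weaker claim, that most $n\in[M,2M]$ possess \emph{some} $y$-smooth divisor of size $\asymp M^{1/2}$, also fails: the $y$-smooth part of a typical integer $n$ has logarithm $\sum_{p\le y}(\log p)/p\asymp\log y$, so is of size $(\log N)^{O(1)}$, nowhere near $M^{1/2}$. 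Your subsequent residue-class pigeonhole modulo $Q=\prod_{p\le y}p$ (itself a modulus of size $e^{(1+o(1))y}$, which would already swamp your density budget) therefore never gets off the ground, and the ``two halves'' Fourier scheme built on top of this structure has nothing to act on.

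The paper's route is organised quite differently. It localises not to dyadic intervals but to intervals $[N^{\delta^{i+1}},N^{\delta^i}]$ with $\delta=1-1/\log\log N$, at a cost of $(\log\log N)\log(1/\epsilon)$; it imposes no common divisor or residue class, but instead discards integers with abnormal $\omega(n)$, integers lacking a prime factor in $[5,(\log N)^{1/1200}]$, and integers with a prime-power divisor exceeding $N_i^{1-6/\log\log N_i}$. Crucially, it does not aim for $R(S)=1$ directly: the main technical Proposition~\ref{th-techmain} only produces $S$ with $R(S)=1/d$ for some $d$ in a short range, and Corollary~\ref{cor-techmain} then glues together $d$ disjoint such sets. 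The Fourier step (Proposition~\ref{prop-fourier}) is not a generic minor-arc bound but is organised around a specific divisibility-in-short-intervals hypothesis indexed by prime powers $q\in\mathcal{Q}_A$, which is secured by an iterative pruning argument (Proposition~\ref{prop-tech}); this is what replaces the ``minor-arc cancellation via common smooth divisor'' step that you correctly flag as the main obstacle but do not resolve.
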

Let $\lambda(N)$ be the maximum possible size of $\sum_{n\in A}\frac{1}{n}$ if $A\subset \{1,\ldots,N\}$ has no subset $S$ with $\sum_{n\in S}\frac{1}{n}=1$. The primes show that\footnote{We use Vinogradov's notation $f\ll g$ to denote the existence of some absolute constant $C>0$ such that $\abs{f(n)}\leq C\abs{g(n)}$ for all sufficiently large $n$.} $\lambda(N) \gg \log\log N$. Pomerance has observed\footnote{To our knowledge, this observation has only appeared in the literature in the PhD thesis of Croot \cite{Crootphd}, who heard it from Pomerance via personal communication. We include a proof of this lower bound in an appendix  for the convenience of the reader.}  that taking all $n$ with a prime factor $p\gg n/\log n$ provides the lower bound $\lambda(N) \gg (\log\log N)^2$. This is the best lower bound that we are aware of, so that our current state of knowledge is now
\[(\log\log N)^2 \ll \lambda(N) \ll \frac{\log \log\log N}{\log\log N}\log N.\]
 We suspect that the lower bound is closer to the truth, in that $\lambda(N)\leq (\log N)^{o(1)}$. We have not tried to fully optimise the upper bound for $\lambda(N)$ in this paper, since the argument is quite technical even without such an optimisation. Nonetheless, we do not believe the methods in this paper alone are strong enough to prove $\lambda(N) \ll (\log N)^{1-c}$ for some $c>0$, let alone $\lambda(N)\ll (\log N)^{o(1)}$, for which significant new ideas would be needed.

 \subsection*{Formalisation of the proof} The proofs of the main results of this paper have now been completely formally verified using the Lean proof assistant, in joint work with Bhavik Mehta. We discuss this formalisation in Appendix~\ref{app:form}, which is written for the curious mathematician who has never themselves yet dabbled in formalistaion, to give a flavour of what the process involves.

 \subsection*{Comparison to Croot's method}
The main result of Croot \cite{Cr2003} is actually a density statement for sets of sufficiently smooth numbers. Roughly speaking, Croot proves that if $A\subset [2,N]$ has $\sum_{n\in A}\frac{1}{n}> 6$ and all $n\in A$ are `$n^{1/4-o(1)}$-smooth' (i.e. if a prime power $q$ divides $n\in A$ then $q\leq n^{1/4-o(1)}$) then $A$ contains some $S$ with $\sum_{n\in S}\frac{1}{n}=1$. This suffices to deduce the colouring statement of Theorem~\ref{th-croot}, but is too weak to deduce an unrestricted density statement, since the set of $n\in \bbn$ which are \emph{not} $n^{1/4-o(1)}$ smooth has positive density. 

In \cite{Crootphd} Croot suggests that it is possible that his method could be strengthened to improve this smoothness threshold to $n^{1-o(1)}$, which would suffice to show that all sets with positive density contain some $S$ with $\sum_{n\in S}\frac{1}{n}=1$, since almost all integers $n\in\bbn$ are $n^{1-o(1)}$ smooth. Such an improvement is essentially what we provide in this paper, and our methods are a stronger form of the ideas introduced by Croot. Many of our lemmas are similar to those of \cite{Cr2003}, but since we require stronger forms of them than Croot uses, and to keep this paper self-contained, we include full proofs throughout, indicating the correspondence to parts of \cite{Cr2003} as we proceed.

\subsection*{Structure of the paper}
In Section~\ref{sec2} we state our main technical proposition, Proposition~\ref{th-techmain}, and use elementary methods to deduce Theorems~\ref{th-dens} and \ref{th-quant}. The remainder of the paper is occupied with the proof of Proposition~\ref{th-techmain}.

Before we begin, we remark on the use of explicit constants in this paper. In many places we have used explicit constants and exponents (e.g. $(\log N)^{-1/100}$). The precise form of these exponents should not be taken particularly seriously -- in general we have chosen nice round numbers that allow for our statements to be true with `room to spare'.

\subsection*{Acknowledgements}
The author is funded by a Royal Society University Research Fellowship. We would like to thank the anonymous referee for suggesting the inclusion of expanded expository remarks after Proposition~\ref{th-techmain}. 

\section{The main technical proposition}\label{sec2}
We introduce the convenient notation $R(A)=\sum_{n\in A}\frac{1}{n}$\label{def-ra}, which will be used throughout the rest of the paper. A summary of the non-standard notation employed often in this paper, together with the definition and where it is first used, is in Table~\ref{tab-not}.

\renewcommand{\arraystretch}{1.5}
\begin{table}[h!]
\centering
\caption{Index of non-standard notation}
\label{tab-not}
\begin{tabularx}{1\textwidth} {
| >{\centering\arraybackslash\hsize=1\hsize}X | >{\centering\arraybackslash\hsize1.8\hsize}X | >{\centering\arraybackslash\hsize=.2\hsize}X |
}
 \hline
 $R(A)$ & $\sum_{n\in A}\frac{1}{n}$ & p.\pageref{def-ra}\\
 $A_q$ & $\{ n\in A: q\mid n\textrm{ and }(q,n/q)=1\}$ & p.\pageref{def-aq}\\
 $\mathcal{Q}_A$ & $\{ q\textrm{ a prime power}: A_q\neq \emptyset\}$ & p.\pageref{def-qa}\\
 $R(A;q)$ & $\sum_{n\in A_q}\frac{q}{n}$ & p.\pageref{def-raq}\\
 \hline
\end{tabularx}

\end{table}

 We recall two elementary estimates of Mertens which will be used frequently:
\begin{equation}\label{mertens1}
\sum_{q\leq X}\frac{1}{q}= \log\log X+c+O(1/\log X)\textrm{ for some constant }c,
\end{equation}
and
\begin{equation}\label{mertens2}
\prod_{p\leq X}\brac{1-\frac{1}{p}}^{-1}\asymp \log X,
\end{equation}
where $q$ indicates the summation is restricted to prime powers, and $p$ indicates the product is restricted to primes. Proofs of these estimates can be found in any textbook on analytic number theory (for example, \cite[Chapter 2]{MV}). Much more precise estimates are known, but these weak forms are all that we will require.

We now present the main technical proposition of this paper, from which the results in the introduction follow via elementary manipulations. Roughly speaking, this shows that a solution to $R(S)=1/d$, for some small $d$ in a restricted range, can be found in any set $A$ which is 
\begin{enumerate}
\item reasonably large (i.e. $R(A)$ is large),
\item not too `rough' (i.e. every $n\in A$ has at least two small divisors),
\item reasonably smooth, (i.e. no $n\in A$ is divisible by a prime power $q>n^{1-o(1)}$), and
\item arithmetically `regular' (i.e. every $n\in A$ has roughly $\log\log n$ (the expected amount) many distinct prime factors).
\end{enumerate}
Conditions (2)-(4) are not very restrictive in practice, since they are satisfied by almost every $n\in \bbn$, and hence in applications one can easily discard any potential troublemakers and still be left with a large enough $R(A)$. The fact that we are finding solutions to $R(S)=1/d$ with $d$ small is also not a problem in applications, since any $d$ such disjoint solutions can be trivially combined into some $S'$ with $R(S')=1$ as required.

This should be compared to the Main Theorem of \cite{Cr2003}, which achieves the conclusion $R(S)=1$ directly, but under a stronger smoothness hypothesis (namely a smoothness threshold of $N^{1/4-o(1)}$ rather than our $N^{1-o(1)}$), and without any roughness hypothesis (corresponding to our condition (2)). The convenience of every $n\in A$ having at least two small divisors is a novel feature of our approach.

\begin{proposition}\label{th-techmain}
Let $N$ be sufficiently large. Suppose $A\subset [N^{1-1/\log\log N},N]$ and $1\leq y\leq z\leq (\log N)^{1/500}$ are such that
\begin{enumerate}
\item $R(A)\geq 2/y+(\log N)^{-1/200}$,
\item every $n\in A$ is divisible by some $d_1$ and $d_2$ where $y\leq d_1$ and $4d_1\leq d_2\leq z$,
\item every prime power $q$ dividing some $n\in A$ satisfies $q\leq N^{1-6/\log\log N}$, and
\item every $n\in A$ satisfies
\[\tfrac{99}{100}\log\log N\leq \omega(n) \leq 2\log\log N.\]
\end{enumerate}
There is some $S\subset A$ such that $R(S)=1/d$ for some $d\in [y,z]$.
\end{proposition}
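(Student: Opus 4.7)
The natural strategy is to locate a single modulus $d\in[y,z]$ for which the set $B_d=\{n/d:n\in A,\,d\mid n,\,(d,n/d)=1\}$ supports an Egyptian fraction representation of $1$; then $S=d\cdot T$ for $T\subset B_d$ with $R(T)=1$ gives $R(S)=1/d$ as desired. Dividing through by $d$ is the right move because it converts the target $R(S)=1/d$ into the cleaner $R(T)=1$, while the hypotheses are essentially preserved: since $d\leq z\leq(\log N)^{1/500}$, elements of $B_d$ still live in a window just below $N$, retain smoothness (condition (3)), and retain $\omega\asymp \log\log N$ (condition (4)).

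\paragraph{Step 1: Finding a good modulus $d$.} Assign to each $n\in A$ a divisor $d_1=d_1(n)\in[y,z/4]$ as guaranteed by (2). Averaging over $d$ in dyadic (or arithmetic) ranges inside $[y,z/4]$ and using $R(A)\geq 2/y+(\log N)^{-1/200}$, the excess $(\log N)^{-1/200}$ should be enough to pigeonhole some particular $d$ for which the normalised quantity $R(A;d)=\sum_{n\in A_d}d/n$ exceeds $1$ by a small but quantitatively useful margin. The factor of $2$ in the hypothesis gives headroom for throwing away pathological sub-collections along the way (e.g.\ where $(d,n/d)\neq 1$, or where the second divisor $d_2$ is lost after dividing by $d$).

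\paragraph{Step 2: Exploiting the second divisor and executing a Croot-type argument.} The second divisor $d_2\geq 4d_1$ is the new structural input: after passing to $B_d$, every $m\in B_d$ still carries a small divisor (namely $d_2$ or $d_2/d$, which is $\geq 4$ and at most $z$). This ``two small divisors'' feature is exactly what allows the Croot-style circle method to succeed at the weaker smoothness threshold $N^{1-o(1)}$. Once $B=B_d$ satisfies $R(B)\gtrsim 1+\eta$ for some tiny $\eta\gg(\log N)^{-1/200}$, is $N^{1-o(1)}$-smooth, has $\omega(m)$ concentrated near $\log\log N$, and each $m\in B$ has a small divisor, we apply a Fourier/circle-method argument, weighting subsets $T\subset B$ by a smooth indicator of $R(T)=1$ and expanding via characters modulo an appropriate $q$. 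The major arcs produce a positive main term of size governed by $\eta$ and Mertens-type products (\ref{mertens1})--(\ref{mertens2}); the minor arcs must be absorbed into this.

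\paragraph{Main obstacle.} The real work is the minor-arc estimate, which amounts to bounding exponential sums of the shape $\sum_{m\in B}e(am/q)/m$ (and higher-order analogues coming from the generating function for $R(T)$). The small divisor of each $m$ lets us factorise $m=d'\cdot m'$ and save a factor of $d'$ by varying $m'$, while the normality of $\omega(m)$ (condition (4)) ensures that $1/m$ behaves pseudorandomly modulo small prime powers; the smoothness (condition (3)) gives access to enough usable moduli. Carrying all of this out with explicit control quantitatively strong enough to beat the $(\log N)^{-1/200}$ budget is where the bulk of the technical effort will go, and I would expect this to occupy the remainder of the paper. The somewhat delicate balance between the exponents in (1), the ranges in (2), and the smoothness in (3) is presumably tuned precisely so that the major-arc gain survives the minor-arc loss.
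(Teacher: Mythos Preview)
Your plan diverges from the paper's proof in a way that introduces a real gap. The pigeonhole in Step~1 does not give what you need: from $R(A)\geq 2/y$ you obtain only
\[
\sum_{d\in[y,z/4]}\frac{1}{d}\Bigl(\sum_{n:\,d_1(n)=d}\frac{d}{n}\Bigr)\geq \frac{2}{y},
\]
and since $\sum_{d\in[y,z/4]}1/d$ can be as large as $\log(z/y)\asymp\log\log N$, the best you can extract for a single $d$ is $R(A;d)\gg 1/(y\log\log N)$, nowhere near $1$. So you cannot pass to $B_d$ with $R(B_d)\gtrsim 1$, and the reduction to ``find $T\subset B_d$ with $R(T)=1$'' collapses. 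Even if you could, finding $R(T)=1$ under a smoothness threshold of $N^{1-o(1)}$ is essentially the full strength of the result you are trying to prove; Croot's theorem only gives this under $N^{1/4-o(1)}$-smoothness.

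You have also misread the purpose of condition~(2). The paper never divides by $d$. Its circle-method proposition is stated for arbitrary $k$ and directly produces $S$ with $R(S)=1/k$; the issue is that verifying its interval hypothesis may fail, and when it does a pruning argument yields $B\subset A$ with $R(B)\geq\tfrac{1}{3}R(A)$ that \emph{does} satisfy it. Starting from $R(A)\approx 2/d$ one is left with $R(B)\gtrsim 2/(3d)$, which is still $>2/(4d)$, so one can retarget $k'\in[4d,z]$. Condition~(2) is exactly what guarantees that after this fallback every $n\in B$ still has a divisor in the new target range, so that $k'\mid\mathrm{lcm}(B)$ as required. The two divisors encode two scales of targets for a two-pass iteration, not ``a surviving small divisor after quotienting''. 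The actual proof runs Lemma~\ref{lem-pisq} iteratively to shrink $R$ down through the values $2/d_i$, locates the first $d_j$ with a multiple present, applies the dichotomy of Proposition~\ref{prop-tech}, and in the bad case repeats once more on the range $[4d_j,z]$, where Proposition~\ref{prop-tech} is now forced into its good case.
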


Before proceeding with the applications and proof of Proposition~\ref{th-techmain}, we make some further remarks about the reason we aim only to find a solution to $R(S)=1/d$ with $d$ small (instead of directly finding some $S$ with $R(S)=1$) and the utility of condition (2), that there exist some pair of small divisors. For simplicity we will only consider the case when $y=1$ and $z=4$, so that condition (2) becomes `every $n\in A$ is divisible by 4' and the conclusion becomes `there is some $S\subset A$ such that $R(S)=1/d$ for some $1\leq d\leq 4$'. 

Similar to Croot's method, we will would like to find solutions to $1=\sum_{n\in S}\frac{1}{n}$ using the circle method. Our formulation of the circle method (Proposition~\ref{prop-fourier} below) requires, amongst other assumptions, that
\begin{enumerate}
\item $R(A)\approx 2$ and
\item a further technical hypothesis about the distribution of multiples of elements of $A$ in short intervals.
\end{enumerate}
(Roughly speaking, the first ensures that the `major arc' contribution is positive, and the second ensures that the `minor arc' contribution is small.) Since condition (1) of Proposition~\ref{th-techmain} says $R(A)>2$ we can arrange for $R(A)\approx 2$ by discarding some elements of $A$. Our conditions are not strong enough to force the second, technical, hypothesis, however.

Following Croot, we aim to show that there is some subset $A'\subseteq A$ which satisfies this additional hypothesis via a combinatorial `pruning' technique. Unfortunately, to allow for the high `smoothness' threshold the additional hypothesis is quite restrictive, and therefore the pruning is severe. In particular, supposing we begin with $R(A)\approx 2$, our method shows that either $A$ itself satisfies the additional technical hypothesis, or else there is some pruned $B\subseteq A$ which does satisfy it -- but we have lost some of the mass of $A$, and can now only say that $R(B)\geq \frac{1}{3}R(A)\approx 2/3$. Since it is possible that $R(B)\approx 2/3$, whatever games we play we certainly cannot hope to find a solution to $R(S)=1$ with $S\subseteq B$! 

This means that we have to relax our initial goal, and instead be content with e.g. finding a solution to $R(S)=1/4$ (after pruning $B$ down a little further so that $R(B)\approx 2/4$ -- this is not strictly necessary in our simplified setup, but allowing for a small amount of further pruning eases some of the technical burdens in the general case). This introduces a further divisibility constraint however, since we can only hope to find a solution to $R(S)=1/4$ with $S\subseteq B$ if $4$ divides the lowest common multiple of $B$. This is where condition (2) becomes useful -- since every element of $A$ is divisible by $4$, certainly $4$ must divide the lowest common multiple of $B\subseteq A$. The universality of condition (2) is helpful here -- if there existed a subset of $A$, even a sparse one, that consisted of only odd numbers then it may be that our pruning procedure conspired so that $B$ is contained in this subset, and then we cannot find a solution to $R(S)=1/4$ with $S\subseteq B$. 

We hope that this sketch illustrates some of the issues involved and the reason for the particular form of Proposition~\ref{th-techmain}. 

In the remainder of this section we will deduce Theorems~\ref{th-dens} and \ref{th-quant} from Proposition~\ref{th-techmain}, which will be proved in the remainder of the paper.

\subsection{Proof of Theorem~\ref{th-quant}}

We deduce Theorem~\ref{th-quant} via the following corollary of Proposition~\ref{th-techmain}.
\begin{corollary}\label{cor-techmain}
Suppose $N$ is sufficiently large and $A\subset [N^{1-1/\log\log N},N]$ is such that
\begin{enumerate}
\item $R(A)\geq (\log N)^{1/200}$,
\item every $n\in A$ is divisible by some prime $p$ satisfying $5 \leq p \leq (\log N)^{1/500}$,
\item every prime power $q$ dividing some $n\in A$ satisfies $q\leq N^{1-6/\log\log N}$, and
\item every $n\in A$ satisfies
\[\tfrac{99}{100}\log\log N\leq \omega(n) \leq 2\log\log N.\]
\end{enumerate}
There is some $S\subset A$ such that $R(S)=1$.
\end{corollary}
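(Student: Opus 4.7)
The plan is to apply Proposition~\ref{th-techmain} iteratively with parameters $y = 1$ and $z = (\log N)^{1/500}$, extracting many disjoint subsets $S_i \subseteq A$ with unit-fraction reciprocal sums, and then combining enough of them sharing a common denominator to obtain a set with $R(S) = 1$.

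First I check the hypotheses of Proposition~\ref{th-techmain} for $A$ itself. Condition (1) of the proposition reads $R(A) \geq 2 + (\log N)^{-1/200}$, which is immediate from $R(A) \geq (\log N)^{1/200}$. For condition (2) I take $d_1 = 1$ and $d_2 = p$, where $p$ is the prime supplied by hypothesis (2) of the corollary; then $4 d_1 = 4 \leq 5 \leq p = d_2 \leq (\log N)^{1/500}$, so condition (2) of the proposition holds. Conditions (3) and (4) of the proposition are identical to the hypotheses (3) and (4) of the corollary.

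Proposition~\ref{th-techmain} therefore yields $S_1 \subseteq A$ with $R(S_1) = 1/d_1$ for some integer $d_1 \in [1, \lfloor z \rfloor]$. If $d_1 = 1$ I am done. Otherwise, since hypotheses (2)--(4) of the proposition are conditions on individual elements and are inherited by any subset of $A$, I may reapply the proposition to $A \setminus S_1$ provided only that $R(A \setminus S_1) \geq 2 + (\log N)^{-1/200}$ still holds. Iterating produces pairwise disjoint $S_1, S_2, \ldots \subseteq A$ with $R(S_i) = 1/d_i$ and $d_i \in \{2, 3, \ldots, \lfloor z \rfloor\}$ (terminating early if ever $d_i = 1$), and I halt once $R\bigl(A \setminus \bigcup_{j \leq i} S_j\bigr) < 2 + (\log N)^{-1/200}$. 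At termination, the total mass removed satisfies
\[
\sum_i \frac{1}{d_i} \;\geq\; R(A) - 2 - (\log N)^{-1/200} \;\geq\; \tfrac{1}{2}(\log N)^{1/200}
\]
for $N$ sufficiently large.

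Now I pigeonhole. Writing $k_d$ for the number of indices $i$ with $d_i = d$, I have $\sum_{d=2}^{\lfloor z \rfloor} k_d/d \geq \tfrac{1}{2}(\log N)^{1/200}$, and the sum has at most $z = (\log N)^{1/500}$ terms, so some $d^* \in \{2,\ldots,\lfloor z\rfloor\}$ satisfies
\[
\frac{k_{d^*}}{d^*} \;\geq\; \tfrac{1}{2}(\log N)^{1/200 - 1/500} \;=\; \tfrac{1}{2}(\log N)^{3/1000},
\]
which exceeds $1$ for $N$ large. In particular $k_{d^*} \geq d^*$, so I may choose distinct indices $i_1, \ldots, i_{d^*}$ with $d_{i_j} = d^*$; the disjoint union $S = S_{i_1} \cup \cdots \cup S_{i_{d^*}}$ then satisfies $R(S) = d^* \cdot (1/d^*) = 1$. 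Once Proposition~\ref{th-techmain} is granted, this combinatorial accounting is the only step, and it presents no real obstacle; the main conceptual point is simply that the hypothesis $R(A) \geq (\log N)^{1/200}$ gives enough slack both to iterate many times and to overcome the pigeonhole loss of a factor $z$.
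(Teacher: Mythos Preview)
Your proof is correct and follows essentially the same approach as the paper's: iteratively apply Proposition~\ref{th-techmain} with $y=1$, $z=(\log N)^{1/500}$ to extract disjoint $S_i$ with $R(S_i)=1/d_i$, then pigeonhole on the denominators to find some $d^*$ occurring at least $d^*$ times. The only cosmetic difference is that the paper phrases the extraction via a maximality argument (take a maximal disjoint family, then derive a contradiction if no $d$ occurs $\geq d$ times), whereas you run the iteration explicitly until the residual mass drops below the threshold; the underlying arithmetic is identical.
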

\begin{proof}
Let $k$ be maximal such that there are disjoint $S_1,\ldots,S_k\subset A$ where, for each $1\leq i\leq k$, there exists some $d_i\in [1,(\log N)^{1/500}]$ such that $R(S_i)=1/d_i$. Let $t(d)$ be the number of $S_i$ such that $d_i=d$. If there is any $d$ with $t(d)\geq d$ then we are done, taking $S$ to be the union of any $d$ disjoint $S_j$ with $R(S_j)=1/d$. Otherwise,
\[\sum_i R(S_i)= \sum_{1\leq d\leq (\log N)^{1/500}} \frac{t(d)}{d}\leq (\log N)^{1/500},\]
and hence if $A'=A\backslash (S_1\cup\cdots \cup S_k)$ then $R(A')\geq (\log N)^{1/500}$. 

We may now apply Proposition~\ref{th-techmain} with $y=1$ and $z=(\log N)^{1/500}$ -- note that condition (2) of Proposition~\ref{th-techmain} follows from condition (2) of Corollary~\ref{cor-techmain} with $d_1=1$ and $d_2=p\in [5,(\log N)^{1/500}]$ some suitable prime divisor. Thus there exists some $S'\subset A'$ such that $R(S')=1/d$ for some $d\in [1,(\log N)^{1/500}]$, contradicting the maximality of $k$.
\end{proof}

The deduction of Theorem~\ref{th-quant} is now a routine exercise in analytic number theory. We will require the following simple application of sieve theory. 

\begin{lemma}\label{lem-sieve1}
Let $N$ be sufficiently large and $z,y$ be two parameters such that $\log N \geq z>y\geq 3$. If $X$ is the set of all those integers not divisible by any prime in $p\in [y,z]$ then
\[\abs{ X\cap [N,2N)}\ll \frac{\log y}{\log z}N.\]
\end{lemma}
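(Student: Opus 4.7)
The plan is to apply a standard upper bound sieve to the interval $[N,2N)$, sifting out the primes in $[y,z]$. Let $P(y,z) = \prod_{y \leq p \leq z} p$, so that
\[|X \cap [N,2N)| = \Abs{\{n \in [N,2N) : \gcd(n, P(y,z)) = 1\}}.\]
Since $z \leq \log N$, by the prime number theorem $\log P(y,z) \leq \theta(z) \ll z \leq \log N$, hence $P(y,z) \leq N^{O(1)}$; in particular the sifting level $z$ is much smaller than $\sqrt N$ for $N$ large.

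Therefore any one-dimensional upper bound sieve (for instance Selberg's sieve, Brun's pure sieve, or the Fundamental Lemma of sieve theory, in its standard form as found in Friedlander--Iwaniec or Iwaniec--Kowalski) applied with density function $\omega(p)=1$ for $y \leq p \leq z$ yields an absolute constant $C$ such that
\[|X \cap [N,2N)| \leq C \cdot N \cdot \prod_{y \leq p \leq z}\brac{1 - \frac{1}{p}}.\]
The key input is that the sifting parameter $z$ satisfies $z \leq N^{1/2}$ with enormous room to spare, so the error terms in the sieve are absorbed into the absolute constant.

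It then remains to estimate the Mertens product. Using \eqref{mertens2},
\[\prod_{y \leq p \leq z}\brac{1 - \frac{1}{p}} = \frac{\prod_{p \leq z}(1 - 1/p)}{\prod_{p < y}(1 - 1/p)} \asymp \frac{1/\log z}{1/\log y} = \frac{\log y}{\log z},\]
and combining this with the sieve bound yields $|X \cap [N,2N)| \ll (\log y / \log z) \cdot N$ as required.

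The main (minor) obstacle is simply picking a clean formulation of the sieve bound whose constant is genuinely absolute, i.e.\ does not depend on $y$ or $z$. In this simple one-dimensional setting with sifting level vastly smaller than $\sqrt N$, essentially any standard reference delivers this without additional work; the entire content of the lemma beyond the invocation of the sieve is the elementary application of Mertens' estimate.
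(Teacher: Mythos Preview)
Your proof is correct and follows essentially the same approach as the paper: apply a sieve to bound $|X\cap[N,2N)|$ by $N\prod_{y\le p\le z}(1-1/p)$ up to an error term, then use Mertens' estimate \eqref{mertens2} to evaluate the product. The only minor difference is that the paper invokes the simplest possible sieve---the Sieve of Eratosthenes, giving the exact formula $\prod_{y\le p\le z}(1-1/p)\,N+O(2^z)$ with an error that is $\leq N^{\log 2}=o(N)$ since $z\le\log N$---whereas you appeal to a more general upper bound sieve; both route through the same Mertens computation and yield the same conclusion.
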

\begin{proof}
Any sieve will suffice here, even the Sieve of Eratosthenes; for example, the formulation of the sieve of Eratosthenes as given in \cite[Theorem 3.1]{MV} yields 
\[\abs{X\cap [N,2N)} = \prod_{y\leq p\leq z}\brac{1-\frac{1}{p}}N+ O(2^z).\]
Mertens' estimate \eqref{mertens2} yields
\[\prod_{y\leq p\leq z}\brac{1-\frac{1}{p}}\asymp \frac{\log y}{\log z},\]
and the result follows. 
\end{proof}

\begin{proof}[Proof of Theorem~\ref{th-quant}]
Let $C\geq 2$ be an absolute constant to be chosen shortly, and for brevity let $\epsilon = \log\log\log N/\log\log N$, so that we may assume that $R(A)\geq C\epsilon \log N$. Since $\sum_{n\leq X}\frac{1}{n}\ll \log X$, if $A'=A\cap [N^\epsilon,N]$ we have (assuming $C$ is sufficiently large) $R(A')\geq \frac{C}{2}\epsilon\log N$. 

Let $X$ be those integers $n\in [1,N]$ not divisible by any prime $p\in [5,(\log N)^{1/1200}]$. Lemma~\ref{lem-sieve1} implies that, for any $x\geq \exp(\sqrt{\log N})$,  
\[\abs{X\cap[x,2x)}\ll \frac{x}{\log\log N}\]
and hence, by partial summation, 
\[\sum_{\substack{n\in X\\ n\in [\exp(\sqrt{\log N}),N]}}\frac{1}{n}\ll \frac{\log N}{\log\log N}.\]
Similarly, if $Y$ is the set of those $N\in [1,N]$ such that $\omega(n) <\frac{99}{100}\log\log N$ or $\omega(n)\geq \frac{101}{100}\log\log N$ then Tur\'{a}n's estimate (see, for example, \cite[Theorem 2.12]{MV})
\[\sum_{n\leq x}(\omega(n)-\log\log n)^2\ll x\log\log x\]
implies that $\abs{Y\cap [x,2x)}\ll x/\log\log N$ for any $N\geq x\geq \exp(\sqrt{\log N})$, and so
\[\sum_{\substack{n\in Y\\ n\in [\exp(\sqrt{\log N}),N]}}\frac{1}{n}\ll \frac{\log N}{\log\log N}.\]
In particular, provided we take $C$ sufficiently large, we can assume that $R(A'\backslash (X\cup Y))\geq \frac{C}{4}\epsilon \log N$, say. 

Let $\delta=1-1/\log\log N$, and let $N_i=N^{\delta^i}$, and $A_i=(A'\backslash (X\cup Y))\cap [N_{i+1},N_i]$. Since $N_i\leq N^{e^{-i/\log\log N}}$ and $A'$ is supported on $n\geq N^\epsilon$, the set $A_i$ is empty for $i> \log(1/\epsilon)\log\log N$, and hence by the pigeonhole principle there is some $i$ such that
\[R(A_i)\geq \frac{C}{8}\frac{\epsilon\log N}{(\log\log N)\log(1/\epsilon)}.\]
By construction, $A_i\subset [N_{i+1},N_i]\subset [N_i^{1-1/\log\log N_i},N_i]$, and every $n\in A_i$ is divisible by some prime $p$ with $5\leq p\leq (\log N)^{1/1200}\leq (\log N_i)^{1/500}$. Furthermore, every $n\in A_i$ satisfies $\omega(n)\geq \frac{99}{100}\log\log N\geq \frac{99}{100}\log\log N_i$ and $\omega(n)\leq \frac{101}{99}\log\log N\leq 2\log\log N_i$.

Finally, it remains to discard the contribution of those $n\in A_i$ divisible by some large prime power $q> N_i^{1-6/\log\log N_i}$. The contribution to $R(A_i)$ of all such $n$ is at most
\begin{align*}
\sum_{N_i^{1-6/\log\log N_i}< q\leq N_i}\sum_{\substack{n\leq N_i\\ q\mid n}}\frac{1}{n}
&\ll \sum_{N_i^{1-6/\log\log N_i}< q\leq N_i}\frac{\log(N_i/q)}{q}\\
&\ll \frac{\log N_i}{\log\log N_i}\sum_{N_i^{1-6/\log\log N_i}<q\leq N_i}\frac{1}{q}\\
&\ll \frac{\log N}{(\log\log N)^2},
\end{align*}
using Mertens' estimate \eqref{mertens1}. Provided we choose $C$ sufficiently large, this is $\leq R(A_i)/2$, and hence, if $A_i'\subset A_i$ is the set of those $n$ divisible only by prime powers $q\leq N_i^{1-6/\log\log N_i}$, then $R(A_i')\geq (\log N)^{1/200}$, say. All of the conditions of Corollary~\ref{cor-techmain} are now met, and hence there is some $S\subset A_i'\subset A$ such that $R(S)=1$, as required.
\end{proof}

\subsection{Proof of Theorem~\ref{th-dens}}

Finally, we show how Proposition~\ref{th-techmain} implies Theorem~\ref{th-dens}. We will require the following sieve estimate.

\begin{lemma}\label{sieve2}
Let $N$ be sufficiently large and $z,y$ be two parameters such that $(\log N)^{1/2}\geq z>4y\geq 8$. If $Y\subset [1,N]$ is the set of all those integers divisible by at least two distinct primes $p_1,p_2\in [y,z]$ where $4p_1<p_2$ then
\[\abs{ \{1,\ldots,N\}\backslash Y}\ll \brac{\frac{\log y}{\log z}}^{1/2}N.\]
\end{lemma}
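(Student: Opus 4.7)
The plan is to decompose $\{1,\ldots,N\}\setminus Y$ into two sieved sets determined by a carefully chosen balance point $t\in(y,z)$, and then apply Lemma~\ref{lem-sieve1} to each. We may assume throughout that $\log z$ is sufficiently large compared to $\log y$ (say $\log z \geq 10^5 \log y$), since otherwise $(\log y/\log z)^{1/2}$ is bounded below by an absolute constant and the bound is trivial from $\abs{\{1,\ldots,N\}\setminus Y}\leq N$.

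The crucial choice is $t = \exp(\sqrt{\log y \log z})$, so that $\log t = \sqrt{\log y \log z}$ and
\[
\frac{\log y}{\log t} \;=\; \frac{\log t}{\log z} \;=\; \sqrt{\frac{\log y}{\log z}}.
\]
Under the assumption $\log z \gg \log y$ one has $4y < t < \sqrt{z}$, and of course $t\leq z\leq (\log N)^{1/2}\leq \log N$. The combinatorial key is the observation: if an integer $n\leq N$ has a prime factor $p_1\in[y,t/4]$ and a prime factor $p_2\in[t,z]$, then $p_1,p_2$ are distinct primes and $4p_1\leq t\leq p_2$; equality $4p_1=p_2$ is impossible since then $p_1\mid p_2$ with $p_2$ prime, so $4p_1<p_2$ and hence $n\in Y$. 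Contrapositively, any $n\in\{1,\ldots,N\}\setminus Y$ lies in $X_1\cup X_2$, where $X_1$ is the set of integers in $[1,N]$ with no prime factor in $[y,t/4]$ and $X_2$ is the set of integers in $[1,N]$ with no prime factor in $[t,z]$.

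I would then bound $\abs{X_1}$ and $\abs{X_2}$ by applying Lemma~\ref{lem-sieve1} to dyadic intervals $[N_k,2N_k)$ with $N_k = N/2^{k+1}$. For those $k$ with $\log N_k$ large enough to invoke the sieve (namely $N_k\geq e^{t/4}$ for $X_1$ and $N_k\geq e^z$ for $X_2$), the contributions sum geometrically to
\[
\abs{X_1}\ll \frac{\log y}{\log(t/4)}N + e^{t/4}, \qquad \abs{X_2}\ll \frac{\log t}{\log z}N + e^{z},
\]
where the final terms bound the dyadic tail treated trivially. Since $z\leq (\log N)^{1/2}$, both error terms are $N^{o(1)}$ and absorbed; and since $t$ is large we have $\log(t/4)\asymp \log t$. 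Putting this together with the balancing identity above yields $\abs{X_1}+\abs{X_2}\ll \sqrt{\log y/\log z}\,N$, as required.

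The only substantive obstacle is the choice of $t$: a naive geometric mean $\sqrt{yz}$ balances the primes themselves but not the sieve bounds, which depend on logarithms of the prime ranges. The correct prescription is to balance in log-log scale by taking $\log t$ to be the geometric mean of $\log y$ and $\log z$; everything else is then a routine dyadic decomposition.
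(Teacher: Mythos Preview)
Your proof is correct and shares the paper's key idea: the balance point $t=\exp(\sqrt{\log y\log z})$ (called $w$ in the paper). The decomposition differs slightly, however. The paper covers $\{1,\ldots,N\}\setminus Y$ by (i) integers with no prime factor in $[w,z]$ and (ii) for each prime $p\in[w,z]$, integers divisible by $p$ but by no prime in $[y,p/4)$; handling (ii) then requires the estimate $\sum_{p\geq w}\frac{1}{p\log p}\ll 1/\log w$ via partial summation. You instead replace (ii) by the single sieved set of integers with no prime factor in $[y,t/4]$, bounded by a second direct application of Lemma~\ref{lem-sieve1}. Your route is thus a touch more elementary, trading the variable threshold $p/4$ and the prime sum for a fixed cut and a slightly cruder (but harmless, for an upper bound) covering $X_1\cup X_2$; both arguments deliver the same exponent $1/2$.
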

\begin{proof}
Let $w\in (4y,z)$ be some parameter to be chosen later. Lemma~\ref{lem-sieve1} implies that the number of $n\in \{1,\ldots,N\}$ not divisible by any prime $p\in [w,z]$ is $\ll \frac{\log w}{\log z}N$.

Similarly, for any $p\in [w,z]$, the number of those $n\in [1,N]$ divisible by $p$ and no prime  $q\in [y,p/4)$ is
\[\ll \frac{\log y}{\log p}\frac{N}{p}.\]
It follows that the number of $n\in\{1,\ldots,N\}\backslash Y$ is 
\[\ll \brac{\frac{\log w}{\log z}+ \log y\sum_{p\geq w}\frac{1}{p\log p}}N.\]
By partial summation, $\sum_{p\geq w}\frac{1}{p\log p}\ll 1/\log w$, and hence 
\[\abs{ \{1,\ldots,N\}\backslash Y}\ll \brac{\frac{\log w}{\log z}+\frac{ \log y}{\log w}}N.\]
Choosing $w=\exp\brac{\sqrt{(\log y)(\log z)}}$ completes the proof. 
\end{proof}

\begin{proof}[Proof of Theorem~\ref{th-dens}]
Suppose $A\subset \bbn$ has upper density $\delta>0$. Let $y=C_1/\delta$ and $z=\delta^{-C_2\delta^{-2}}$, where $C_1,C_2$ are two absolute constants to be determined later. It suffices to show that there is some $d\in [y,z]$ and finite $S\subset A$ such that $R(S)=1/d$. Indeed, given such an $S$ we can remove it from $A$ and still have an infinite set of upper density $\delta$, so we can find another $S'\subset A\backslash S$ with $R(S')=1/d'$ for some $d'\in [y,z]$, and so on. After repeating this process at least $\lceil z-y\rceil^2$ times there must be some $d\in [y,z]$ with at least $d$ disjoint $S_1,\ldots,S_d\subset A$ with $R(S_i)=1/d$. Taking $S=S_1\cup\cdots \cup S_d$ yields $R(S)=1$ as required.

By definition of the upper density, there exist arbitrarily large $N$ such that $\abs{A\cap [1,N]}\geq \frac{\delta}{2}N$. The number of $n\in [1,N]$ divisible by some prime power $q\geq N^{1-6/\log\log N}$ is
\[\ll N \sum_{N^{1-6/\log\log N}<q\leq N}\frac{1}{q}\ll \frac{N}{\log\log N}\]
by Mertens' estimate \eqref{mertens1}. Further, by Tur\'{a}n's estimate
\[\sum_{n\leq N}(\omega(n)-\log\log N)^2 \ll N \log\log N,\]
the number of $n\in [1,N]$ that do not satisfy 
\begin{equation}\label{divs}
\tfrac{99}{100}\log\log N\leq \omega(n) \leq 2\log\log N
\end{equation}
is $\ll N/\log\log N$. Finally, provided we choose $C_2$ sufficiently large in the definition of $z$, Lemma~\ref{sieve2} ensures that the proportion of all $n\in \{1,\ldots,N\}$ not divisible by at least two distinct primes $p_1,p_2\in [y,z]$ with $4p_1<p_2$ is at most $\frac{\delta}{8}N$, say. 

In particular, provided $N$ is chosen sufficiently large (depending only on $\delta$), we may assume that $\abs{A_N}\geq \frac{\delta}{4}N$, where $A_N\subset A$ is the set of those $n\in A\cap [N^{1-1/\log\log N},N]$ which satisfy conditions (2)-(4) of Proposition~\ref{th-techmain}. Since $\abs{A_N}\geq \frac{\delta}{4}N$, 
\[R(A_N) \gg -\log(1-\delta/4)\gg \delta.\]
In particular, since $y=C_1/\delta$ for some suitably large constant $C_1>0$, we have that $R(A_N)\geq 4/y$, say. All of the conditions of Proposition~\ref{th-techmain} are now satisfied (provided $N$ is chosen sufficiently large in terms of $\delta$), and hence there is some $S\subset A_N\subset A$ such that $R(S)=1/d$ for some $d\in [y,z]$, which suffices to prove Theorem~\ref{th-dens} as discussed above. 
\end{proof}

It remains to establish Proposition~\ref{th-techmain}, which will be the task of the remainder of the paper.

\section{A Fourier analytic argument}
We follow the approach of Croot \cite{Cr2003}, and detect solutions to $R(S)=1/k$ for some fixed $k$ using Fourier analysis. The two important differences in the following proposition from the work of Croot are the detection of solutions to $R(S)=1/k$ for arbitrary integer $k$ (a flexibility which we require for our applications), and that the condition (4) is now weighted by a factor depending on $q$. By contrast, in \cite{Cr2003} Croot has the simpler condition that the number of $n\in A$ such that no element of $I$ is divisible by $n$ is at most $N^{3/4-o(1)}$. Roughly, our approach allows for this $N^{3/4-o(1)}$ to be replaced with $N^{o(1)}$, while replacing $n\in A$ with the weaker $n\in A_q$. It is ultimately this replacement of $N^{3/4-o(1)}$ with $N^{o(1)}$ that results in the `smoothness threshold' being $N^{1-o(1)}$ rather than Croot's $N^{1/4-o(1)}$, which in turn allows us to deduce strong density results.

For any finite $A\subset \bbn$ and prime power $q$ we define\label{def-aq}
\[A_q = \{ n\in A : q\mid n\textrm{ and }(q,n/q)=1\}\]
and let $\mathcal{Q}_A$ \label{def-qa} be the set of all prime powers $q$ such that $A_q$ is non-empty (i.e. those $p^r$ such that $p^r\| n$ for some $n\in A$). 

\begin{proposition}\label{prop-fourier}
There is an absolute constant $c>0$ such that the following holds. Suppose that $N\geq M\geq N^{3/4}$ and $k$ is an integer satisfying $1\leq k\leq cM$. Suppose further that $\eta\in(0,1)$ and $M/2\geq K\geq N^{3/4}$. Let $A\subset [M,N]$ be a set of integers such that
\begin{enumerate}
\item $R(A)\in [2/k-1/M,2/k)$,
\item $k$ divides the lowest common multiple of $A$,
\item if $q\in\mathcal{Q}_A$ then $q\leq c\min\brac{\frac{M}{k},\frac{\eta MK^2}{N^2(\log N)^2}}$,  and
\item for any interval $I$ of length $K$, either
\begin{enumerate}
\item \[\# \{ n\in A : \textrm{no element of }I\textrm{ is divisible by }n\}\geq M/\log N,\]
or
\item if $\mathcal{D}_I$ is the set of $q\in\mathcal{Q}_A$ such that
\[\#\{ n\in A_q: \textrm{no element of }I\textrm{ is divisible by }n\}<\eta \frac{M}{q},\]
there is some $x\in I$ divisible by all $q\in\mathcal{D}_I$.
\end{enumerate}
\end{enumerate}
There is some $S\subset A$ such that $R(S)=1/k$. (In fact, there are at least $2^{\Omega(\abs{A})}$ many such $S$.)
\end{proposition}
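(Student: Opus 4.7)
The approach is the Hardy--Littlewood circle method. First I would set $L=\mathrm{lcm}(A\cup\{k\})$, so that for every $S\subseteq A$ the quantity $LR(S)$ is an integer and $L/k\in\bbz$. Standard orthogonality then gives an integral representation
\[
\#\{S\subseteq A : R(S)=1/k\}=\int_0^1 \prod_{n\in A}\bigl(1+e(\alpha L/n)\bigr)\,e(-\alpha L/k)\,d\alpha,
\]
so the task is to analyse the generating function $F(\theta)=\prod_{n\in A}(1+e(\theta/n))$ and extract the coefficient corresponding to $1/k$. I would then perform a major/minor arc decomposition, where major arcs are neighbourhoods of rationals $a/d$ with $d$ small (in particular $d\mid k$).

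On the major arcs, $|1+e(\theta/n)|\approx 2$ and a Taylor expansion near each cusp gives $F(\theta)e(-\theta L/k)\approx 2^{|A|}$ up to a slowly varying phase. Condition~(1), $R(A)\in[2/k-1/M,2/k)$, is precisely what one needs for the phase at the cusp $\theta=0$ to produce a positive main term of size $\gg 2^{|A|}$ (it places the stationary phase of $e(\theta(R(A)-1/k))$ near the origin). Condition~(2), $k\mid\mathrm{lcm}(A)$, ensures the singular series does not vanish on divisibility grounds, and condition~(3) keeps every prime power divisor small enough that the local factors of the singular series remain $\asymp 1$. The outcome of this step is a lower bound $\gg 2^{|A|}$ on the major-arc contribution.

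The bulk of the work is the minor arc estimate: one must show $|F(\theta)|\ll 2^{|A|}e^{-c|A|/\log N}$ (or some similar savings) uniformly off the major arcs, so that the minor arcs cannot swamp the main term. Since $|1+e(\theta/n)|=2|\cos(\pi\theta/n)|$, the factor attached to $n$ is bounded away from $2$ precisely when $\theta/n$ is bounded away from $\bbz$, equivalently when $n$ does not divide any integer in a short interval $I$ of length $\asymp K$ located near $\theta$. This is exactly the hypothesis formalised by condition~(4). In case (4a) at least $M/\log N$ elements $n\in A$ fail to divide any element of $I$, which gives a per-factor saving $|\cos(\pi\theta/n)|\leq 1-c/(\log N)^{2}$ on that many factors, easily enough to beat $2^{|A|}$. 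In case (4b) I would exploit the existence of $x\in I$ divisible by every $q\in\mathcal{D}_I$ to write $\theta=x+\psi$ and split the product over $A_q$ into a part that is well-approximated by $1+e(\psi\cdot q/(n))$ (harmless because $\psi$ is small) and a part that is genuinely oscillating; summing the $A_q$-savings weighted by $1/q$ across $q\notin\mathcal{D}_I$ then again produces the required cancellation.

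The principal obstacle is the minor-arc estimate in case (4b), since the $q$-weighted savings are fragile and must be large enough, after multiplication by the number $L/K$ of minor-arc intervals and the trivial $2^{|A|}$ outside the savings, to still be dominated by the $2^{|A|}$ main term. The precise $\eta MK^{2}/N^{2}(\log N)^{2}$ clause in condition~(3), together with the lower bounds $M,K\geq N^{3/4}$ and the smallness of $\eta$, is calibrated exactly for this summation to close. Once the minor-arc bound is established, combining with the major-arc lower bound and noting that each recorded $S$ is counted once yields both $\#\{S:R(S)=1/k\}\geq 1$ and the stronger $2^{\Omega(|A|)}$ count.
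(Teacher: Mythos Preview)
Your overall circle-method framework is right, and your handling of case (4a) is essentially the paper's. Two points of divergence are worth flagging.

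On the major arcs, the paper avoids any singular-series analysis by a device you miss: rather than counting $S$ with $R(S)=1/k$, it counts $S$ with $kR(S)\in\bbz$, and condition (1) (namely $R(A)<2/k$) forces every such nonempty $S$ to have $R(S)=1/k$. The resulting exponential sum $\frac{1}{[\mathcal{Q}_A]}\sum_h\prod_{n\in A}(1+e(kh/n))$ carries no twist $e(-\theta/k)$, so the term $h=0$ already contributes $2^{|A|}/[\mathcal{Q}_A]$, and the remaining major-arc terms (around $h=t[\mathcal{Q}_A]/k$) only need to be shown \emph{non-negative}. Writing $kR(A)=2-\epsilon$ with $0<\epsilon\leq k/M$, this drops out of $\cos(\pi krR(A))=\cos(\pi r\epsilon)\geq 0$ for $0\leq r\leq K/2k$. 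No local factors or stationary phase are required; your more classical approach might be pushed through but is unnecessary here.

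On the minor arcs in case (4b), however, there is a genuine gap. You propose to write $\theta=x+\psi$ and harvest pointwise savings from $q\notin\mathcal{D}_I$, but you then have to sum over all minor-arc $h$, and a uniform count of ``$L/K$ intervals'' is far too crude to close against savings that may be only $N^{-O(1)}$ when $|\mathcal{Q}_A\setminus\mathcal{D}_h|=1$. The actual role of hypothesis (4b) is not to analyse the product at a fixed $h$ but to \emph{count} the $h$: if some $x\in I_h$ is divisible by every $q\in\mathcal{D}_h$ then $kh$ lies within $K/2$ of a multiple of $[\mathcal{D}_h]$, so for each fixed $\mathcal{D}\subsetneq\mathcal{Q}_A$ there are at most $Mk[\mathcal{Q}_A]/[\mathcal{D}]\leq kN^{|\mathcal{Q}_A\setminus\mathcal{D}|+1}$ values of $h$ with $\mathcal{D}_h=\mathcal{D}$. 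This $\mathcal{D}$-dependent count is then paired against the pointwise bound $\prod_{n\in A}|\cos(\pi kh/n)|\leq N^{-4|\mathcal{Q}_A\setminus\mathcal{D}_h|}$ (which does indeed come from the $\geq\eta M/q$ bad $n\in A_q$ for each $q\notin\mathcal{D}_h$, via condition (3)), and only the sum $\sum_{\mathcal{D}\subsetneq\mathcal{Q}_A}kN^{1-3|\mathcal{Q}_A\setminus\mathcal{D}|}\ll k/N$ closes the estimate. Your sketch uses the existence of $x$ on the wrong side of the ledger, and without this counting step the argument cannot be completed.
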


It may help to note that in our applications we will choose $\eta=N^{-o(1)}$, $k=N^{o(1)}$, and $M,K=N^{1-o(1)}$, and so the smoothness threshold is $N^{1-o(1)}$. 

\begin{proof}
For brevity, let $X= c\min(\frac{M}{k},\eta MK^2/N^2(\log N)^2)$, where $c>0$ is some absolute constant to be chosen later, so that condition (3) becomes $q\leq X$ for all $q\in\mathcal{Q}_A$. For any set of prime powers $\mathcal{P}$ we write $[\mathcal{P}]$ for the lowest common multiple of all $q\in\mathcal{P}$. Since $A$ is fixed, in this proof we just write $\mathcal{Q}=\mathcal{Q}_A$.

Let $F(A)$ count the number of subsets $S\subset A$ such that $kR(S)$ is an integer. Since $R(S)\leq R(A)<2/k$ and $R(S)=0$ if and only if $S=\emptyset$, the number of $S\subset A$ such that $R(S)=1/k$ is exactly $F(A)-1$.

We first note that, for any integers $a,b$, we have by orthogonality
\[1_{a/b\in \bbz} = \frac{1}{b}\sum_{-b/2< h\leq b/2}e\brac{\frac{ha}{b}},\]
where $e(x)=e^{2\pi ix}$. In particular, since for any $S\subset A$ the rational $kR(S)$ is of the form $km/[\mathcal{Q}]$ for some $m\in \bbz$, 
\[F(A)= \frac{1}{[\mathcal{Q}]}\sum_{-[\mathcal{Q}]/2< h\leq [\mathcal{Q}]/2}\prod_{n\in A}(1+e(kh/n)).\]
The most obvious contribution to $F(A)$ is from $h=0$, which contributes exactly $2^{\abs{A}}/[\mathcal{Q}]$. Furthermore, if $[\mathcal{Q}]$ is even, then $h=[\mathcal{Q}]/2$ contributes
\[\frac{1}{[\mathcal{Q}]}\prod_{n\in A}(1+e(k[\mathcal{Q}]/2n))\]
which is some real number $\geq 0$, since $e(k[\mathcal{Q}]/2n)\in \{-1,1\}$ for all $n$ dividing $[\mathcal{Q}]$ (and so certainly for all $n\in A$). Therefore,
\begin{equation}\label{fbound}
F(A)\geq \frac{2^{\abs{A}}}{[\mathcal{Q}]}+\frac{1}{[\mathcal{Q}]}\sum_{h\in J}\prod_{n\in A}(1+e(kh/n))
\end{equation}
where $J= (-[\mathcal{Q}]/2,[\mathcal{Q}]/2)\cap \bbz\backslash \{0\}$. Our treatment of $h\in J$ depends on whether $h$ is close to an integer of the form $t[\mathcal{Q}]/k$. We define the `major arc' corresponding to $t\in\bbz$ as
\[\mathfrak{M}(t) = \{ h\in J : \abs{h-t[\mathcal{Q}]/k}\leq K/2k\}\]
(note that since $[\mathcal{Q}] \geq \min(A)\geq K$ the major arcs $\mathfrak{M}(t)$ are disjoint for distinct $t\in\bbz$) and define the minor arcs as $\mathfrak{m} = J\backslash \cup_{t\in \bbz}\mathfrak{M}(t)$. Writing $h\in\mathfrak{M}(t)$ as $t[\mathcal{Q}]/k+r$, the contribution of $\mathfrak{M}(t)$ to the right-hand side of \eqref{fbound} is, using  $1+e(\theta)=2e(\theta/2)\cos(\pi \theta)$,
\[ \frac{1}{[\mathcal{Q}]}\sum_{\substack{r\in  [-K/2k,K/2k]\\ r\in J-t[\mathcal{Q}]/k} }\prod_{n\in A}(1+e(kr/n))=\frac{2^{\abs{A}}}{[\mathcal{Q}]}\sum_{\substack{r\in  [-K/2k,K/2k]\\ r\in J-t[\mathcal{Q}]/k} }\prod_{n\in A}\cos(\pi kr/n)e\brac{ \frac{kr}{2}R(A)}.\]
The total contribution from all major arcs is therefore
\[\frac{2^{\abs{A}}}{[\mathcal{Q}]}\sum_{r\in [-K/2k,K/2k]}\brac{\sum_{t}1_{r\in J-t[\mathcal{Q}]/k}}\prod_{n\in A}\cos(\pi kr/n)e\brac{ \frac{kr}{2}R(A)}.\]
(Note that since $k\mid [\mathcal{Q}]$ the first factor restricts $r$ to be an integer.) 

Since both $\cos(\pi kr/n)$ and $\sum_t 1_{r\in J-t[\mathcal{Q}]/k}$ are symmetric in $r$, this is equal to
\[\frac{2^{\abs{A}}}{[\mathcal{Q}]}\sum_{r\in [0,K/2k]}(2-1_{r=0})\brac{\sum_{t}1_{r\in J-t[\mathcal{Q}]/k}}\prod_{n\in A}\cos(\pi kr/n)\cos(\pi kr R(A)).\]
Since $n\geq M\geq K$ and $0\leq r\leq K/2k$ we have $kr/n\leq 1/2$ and so certainly $\cos(\pi kr/n)\geq 0$. Furthermore, we can write $kR(A)=2-\epsilon$ for some $0<\epsilon\leq k/M$ and hence (since $r$ is an integer)
\[\cos(\pi kr R(A)) = \cos(-\pi r\epsilon)\geq 0\]
for $0\leq r\leq K/2k$, and hence 
\[\prod_{n\in A}\cos(\pi kr/n)\cos(\pi kr R(A))\geq 0\]
for all $0\leq r\leq K/2k$. It follows that the contribution to the right-hand side of \eqref{fbound} from the union of all $\mathfrak{M}(t)$ is $\geq 0$. It therefore suffices to show that 
\begin{equation}\label{tobound}
\sum_{h\in\mathfrak{m}}\prod_{n\in A}\abs{\cos(\pi kh/n)}\leq 1/4,
\end{equation}
say. Indeed, this implies that $F(A)\geq 2^{\abs{A}-1}/[\mathcal{Q}]$. Since all prime powers $q\in\mathcal{Q}$ satisfy $q\leq X$ we have the bound
\[ [\mathcal{Q}]\leq  X^{\pi(X)}\leq e^{O(X)}\leq 2^{\abs{A}/2},\]
say, using Chebyshev's estimate that $\pi(X)\ll X/\log X$, the trivial estimate $\abs{A}\geq MR(A)\geq M/k$, and the fact that $X\leq cM/k$ (assuming $c$ sufficiently small). It follows that $F(A)\geq 2^{\abs{A}/2-1}>1$ as required.

For any $h\geq 0$, let $\mathcal{D}_h=\mathcal{D}_{I_h}$ as defined in the theorem statement, where $I_h$ is the interval of length $K$ centred at $kh$. That is, $\mathcal{D}_h$ is the set of those $q\in \mathcal{Q}$ such that 
\[\#\{ n\in A_q:\abs{h_n}> K/2 \}< \eta \frac{M}{q},\]
where $kh\equiv h_n \pmod{n}$ with $\abs{h_n}\leq n/2$.

We introduce the convenient notation $C(B;h) = \prod_{n\in B}\abs{\cos(\pi kh/n)}$. Let $\mathfrak{m}_1$ be those $h\in\mathfrak{m}$ such that (4a) holds for $I_h$, and $\mathfrak{m}_2=\mathfrak{m}\backslash\mathfrak{m}_1$. 

We first address those $h\in\mathfrak{m}_1$. Note that if $x\in [0,1/2]$ then
\[\cos(\pi x) \leq 1-x^2\leq  e^{-x^2}.\]
It follows that if $kh\equiv h_n \pmod{n}$, where $\abs{h_n}\leq n/2$, then
\[\abs{\cos(\pi kh/n)}\leq e^{-\frac{h_n^2}{n^2}}.\] 
Further, if $h\in\mathfrak{m}_1$, then $\abs{h_n}\geq K/2$ for at least $M/\log N$ many $n\in A$, and hence
\[C(A;h)\leq \exp\brac{-\sum_{n\in A}\frac{h_n^2}{n^2}}\leq \exp\brac{-\frac{K^2M}{4N^2\log N}}.\]
Therefore
\[\sum_{h\in\mathfrak{m}_1}C(A;h)\leq [\mathcal{Q}]\exp\brac{-\frac{K^2M}{4N^2\log N}}\leq e^{O(X)}\exp\brac{-\frac{K^2M}{4N^2\log N}}.\]
Provided $X\leq c K^2M/N^2\log N$ for some sufficiently small constant $c>0$, this is $\leq 1/8$, and hence to show \eqref{tobound} it suffices to show 
\begin{equation}\label{tobound3}
\sum_{h\in\mathfrak{m}_2}\prod_{n\in A}\abs{\cos(\pi kh/n)}\leq 1/8,
\end{equation}

We will show that, for any $h\in\mathfrak{m}_2$,
\begin{equation}\label{tobound2}
C(A;h)\leq N^{-4\abs{\mathcal{Q}\backslash\mathcal{D}_h}}.
\end{equation}
(Note that this is trivial when $\mathcal{D}_h=\mathcal{Q}$.) Before establishing \eqref{tobound2}, we show how it implies \eqref{tobound3}, and hence concludes the proof.

By condition (4b) of the hypotheses, since $h\in\mathfrak{m}_2$, $kh$ is distance at most $K/2\leq M/2$ from some multiple of $[\mathcal{D}_h]$. Therefore, for any $\mathcal{D}\subset \mathcal{Q}$, the number of $h\in\mathfrak{m}_2$ with $\mathcal{D}_h=\mathcal{D}$ is at most $M$ times the number of multiples of $[\mathcal{D}]$ in $[1,k[\mathcal{Q}]]$, which is at most
\[Mk\frac{[\mathcal{Q}]}{[\mathcal{D}]} \leq Mk\prod_{q\in\mathcal{Q}\backslash\mathcal{D}}q\leq kN^{\abs{\mathcal{Q}\backslash \mathcal{D}}+1}.\]
In particular, by \eqref{tobound2}, the contribution to the left-hand side of \eqref{tobound3} from all $h\in\mathfrak{m}_2$ with $\mathcal{D}_h=\mathcal{D}$ is at most $kN^{1-3\abs{\mathcal{Q}\backslash \mathcal{D}}}$. 

By definition of $\mathfrak{m}$, if $h\in\mathfrak{m}_2$ then $kh$ is distance greater than $K/2$ from any multiple of $[\mathcal{Q}]$, and hence $\mathcal{D}_h\neq \mathcal{Q}$. Therefore (using the trivial estimate $\abs{\mathcal{Q}}\leq N$)
\begin{align*}
\sum_{h\in\mathfrak{m}_2}C(A;h)
&\leq
kN\sum_{\mathcal{D}\subsetneq \mathcal{Q}}N^{-3\abs{\mathcal{Q}\backslash \mathcal{D}}}\\
&\leq \frac{k}{N}(1+1/N)^{\abs{\mathcal{Q}}}\\
&\ll k/N
\end{align*}
which is $\leq 1/8$ provided $c$ is sufficiently small, which proves \eqref{tobound3}.

It remains to establish \eqref{tobound2}. Using the trivial estimate $\omega(n)\ll \log n$, for any $n\in A$ there are $O(\log N)$ many $q\in\mathcal{Q}$ such that $n\in A_q$. It follows that 
 \[C(A;h)\leq \prod_{q\in \mathcal{Q}}C(A_q;h)^{\Omega(1/\log N)}.\]
To establish \eqref{tobound2}, therefore, it suffices to show that for every $q\in\mathcal{Q}\backslash\mathcal{D}_h$ we have $C(A_q;h)\leq N^{-C\log N}$ for some suitably large absolute constant $C>0$. 
 For any $q\in\mathcal{Q}\backslash \mathcal{D}_h$ there are, by definition of $\mathcal{D}_h$, at least $\eta M/q$ many $n\in A_q$ such that $\abs{h_n}>K/2$ and hence
\[C(A_q;h)\leq\exp\brac{-\sum_{n\in A_q}\frac{h_n^2}{n^2}}\leq \exp\brac{-\frac{\eta MK^2}{4N^2q}}.\] 
It therefore is enough to have that, for every $q\in\mathcal{Q}$, 
\[\frac{\eta MK^2}{4N^2q}\geq C(\log N)^2.\]
This follows from $q\leq X\leq c\eta MK^2/N^2(\log N)^2$, provided $c$ is small enough.
\end{proof}

\section{Technical lemmas}
In this section we assemble some useful technical lemmas, most of which are quantitatively stronger forms of lemmas used by Croot \cite{Cr2003}. The following is \cite[Lemma 2]{Cr2003}, which we reprove here to keep the paper self-contained.
\begin{lemma}\label{lem-basic}
If $0<\abs{n_1-n_2}\leq N$ then
\[\sum_{q\mid (n_1,n_2)}\frac{1}{q}\ll \log\log\log N,\]
where the summation is restricted to prime powers.
\end{lemma}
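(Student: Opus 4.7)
\medskip

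\noindent\textbf{Proof proposal.} Since $n_1\neq n_2$, the quantity $d=(n_1,n_2)$ divides $n_1-n_2$ and therefore satisfies $1\leq d\leq N$. The sum in question is $\sum_{q\mid d}\tfrac{1}{q}$ with $q$ ranging over prime powers, and my first step is to collapse this to a sum over primes. Grouping prime powers by their underlying prime and summing the geometric series gives
\[
\sum_{q\mid d}\frac{1}{q}=\sum_{p\mid d}\sum_{r=1}^{v_p(d)}\frac{1}{p^r}\leq \sum_{p\mid d}\frac{1}{p-1}\ll \sum_{p\mid d}\frac{1}{p},
\]
so it suffices to bound $\sum_{p\mid d}\tfrac{1}{p}$ by $O(\log\log\log N)$.

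The second step is the observation that, under the constraint $d\leq N$, the sum $\sum_{p\mid d}\tfrac{1}{p}$ is maximised by taking the distinct prime divisors of $d$ to be the first few primes in increasing order. So if $p_1<\cdots<p_k$ are the distinct primes dividing $d$, replacing them by $2,3,5,\ldots$ can only increase the sum while not increasing $d$. Hence it is enough to bound $\sum_{p\leq Y}\tfrac{1}{p}$, where $Y$ is the largest prime with $\prod_{p\leq Y}p\leq N$.

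The third step is to pin down $Y$. By Chebyshev's estimate $\sum_{p\leq x}\log p\gg x$, so the constraint $\sum_{p\leq Y}\log p=\log\prod_{p\leq Y}p\leq \log N$ forces $Y\ll \log N$. Plugging this into Mertens' estimate \eqref{mertens1} (restricted to primes, which follows from the prime-power version) yields
\[
\sum_{p\mid d}\frac{1}{p}\leq \sum_{p\leq Y}\frac{1}{p}\leq \log\log Y+O(1)\ll \log\log\log N,
\]
completing the proof.

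There is no real obstacle here: both Chebyshev and Mertens are quoted as black boxes, and the only mild subtlety is justifying the extremal reduction to the primorial-like choice of $d$, which is an easy rearrangement argument (replacing any prime divisor of $d$ by a smaller prime not already dividing $d$ decreases $d$ and increases the sum).
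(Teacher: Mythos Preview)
Your proof is correct and follows essentially the same strategy as the paper's: reduce to primes dividing an integer $d\leq N$, argue extremally that the worst case is when those primes are the smallest available, and then invoke Mertens. The only cosmetic differences are that the paper disposes of higher prime powers by noting their total contribution is $O(1)$ (rather than your geometric-series bound), and it reaches the cutoff via the trivial bound $\omega(d)\ll \log N$ together with a prime-counting observation that the first $O(\log N)$ primes all lie below $(\log N)^2$, whereas you use the primorial constraint and Chebyshev's lower bound for $\theta(Y)$ to get $Y\ll \log N$ directly.
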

\begin{proof}
If $q\mid (n_1,n_2)$ then $q$ divides $\abs{n_1-n_2}$, and hence in particular $q\leq N$. The contribution of all prime powers $p^r$ with $r\geq 2$ is $O(1)$, and hence it suffices to show that $\sum_{p\mid \abs{n_1-n_2}}\frac{1}{p}\ll \log\log\log N$. Any integer $\leq N$ is trivially divisible by $O(\log N)$ many primes. Clearly summing $1/p$ over $O(\log N)$ many primes is maximised summing over the smallest $O(\log N)$ primes. Since there are $\gg (\log N)^{3/2}$ many primes $\leq (\log N)^2$, we have
\[\sum_{p\mid \abs{n_1-n_2}}\frac{1}{p}\ll \sum_{p\leq (\log N)^2}\frac{1}{p}\ll \log\log\log N\]
by Mertens' estimate \eqref{mertens1}.
\end{proof}
The following is a quantitatively stronger form of \cite[Lemma 3]{Cr2003}.
\begin{lemma}\label{lem-rtop}
Let $1/2>\epsilon>0$ and $N$ be sufficiently large, depending on $\epsilon$. If $A$ is a finite set of integers such that $R(A)\geq (\log N)^{-\epsilon/2}$ and $(1-\epsilon)\log\log N\leq \omega(n)\leq  2\log\log N$ for all $n\in A$ then 
\[\sum_{q\in\mathcal{Q}_A}\frac{1}{q} \geq (1-2\epsilon)e^{-1}\log\log N.\]
\end{lemma}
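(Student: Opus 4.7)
My plan is to establish the contrapositive: assuming $\beta := \sum_{q\in\mathcal{Q}_A}\frac{1}{q} < (1-2\epsilon) e^{-1} \log\log N$, I deduce that $R(A) < (\log N)^{-\epsilon/2}$. The key structural observation is that every $n\in A$ factors as $n = \prod_{q \| n} q$ with each factor in $\mathcal{Q}_A$, while the hypothesis forces $\omega(n) \geq m_0 := (1-\epsilon) \log\log N$. Thus
\[
R(A) \leq \sum_{\substack{n \geq 1 \\ q \| n \Rightarrow q \in \mathcal{Q}_A \\ \omega(n) \geq m_0}} \frac{1}{n}.
\]
For each prime $p$ appearing as the base of some element of $\mathcal{Q}_A$, set $w_p = \sum_{r : p^r \in \mathcal{Q}_A} p^{-r}$; then $\sum_p w_p = \beta$. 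Organising the $n$ on the right by their set of prime bases and the exact power chosen for each, the majorising sum equals $\sum_{k \geq m_0} e_k(\mathbf{w})$, where $e_k$ denotes the $k$-th elementary symmetric polynomial in the vector $(w_p)_p$.

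Next I would apply the standard inequality $e_k(\mathbf{w}) \leq \beta^k/k!$ (which follows from expanding $(\sum_p w_p)^k$ and discarding the non-negative multinomial contributions with repeated indices), combined with Stirling's bound $k! \geq \sqrt{2\pi k}\,(k/e)^k$, to obtain
\[
R(A) \leq \sum_{k \geq m_0} \frac{(\beta e/k)^k}{\sqrt{2\pi k}}.
\]
Writing $\tau = \beta e / m_0$, successive terms have ratio $\beta/(k+1) \leq \tau/e$, so assuming $\tau \leq 1$ the series is geometrically dominated by its leading term, yielding $R(A) \leq C\,\tau^{m_0}/\sqrt{\log\log N}$ for an absolute constant $C$.

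Finally, the assumed bound on $\beta$ translates to $\tau < (1-2\epsilon)/(1-\epsilon) \leq 1 - \epsilon$ (the second inequality is equivalent to $\epsilon^2 \geq 0$). Thus $\tau^{m_0} \leq e^{-\epsilon m_0} = (\log N)^{-\epsilon(1-\epsilon)} \leq (\log N)^{-\epsilon/2}$ for $\epsilon \leq 1/2$, and so $R(A) \leq C(\log N)^{-\epsilon/2}/\sqrt{\log\log N} < (\log N)^{-\epsilon/2}$ for $N$ sufficiently large, contradicting the hypothesis. The only real subtlety is bookkeeping the $\epsilon$-factors so that the precise constant $e^{-1}$ drops out of Stirling's $(k/e)^k$; everything else is a routine application of the elementary symmetric bound and the geometric series estimate, and crucially the argument uses only the multiplicative structure of $\mathcal{Q}_A$ together with $\omega(n) \geq (1-\epsilon)\log\log N$, never any bound on the range or size of $A$.
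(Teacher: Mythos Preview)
Your argument is correct and follows essentially the same route as the paper's: bound $R(A)$ above by $\sum_{k\ge m_0}\beta^k/k!$ via the factorisation of each $n\in A$ into prime powers from $\mathcal Q_A$, then use $k!\ge (k/e)^k$ so that the threshold constant $e^{-1}$ emerges from Stirling. The differences are cosmetic --- you organise the majorising sum as $e_k(\mathbf w)\le \beta^k/k!$ and control the tail by a geometric series rather than by ``monotonicity times interval length'', and as you observe your version never invokes the upper bound $\omega(n)\le 2\log\log N$, a small simplification over the paper's proof.
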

\begin{proof}
Since, by definition, every integer $n\in A$ can be written uniquely as $q_1\cdots q_t$ for $q_i\in \mathcal{Q}_A$ for some $t\in I = [(1-\epsilon)\log\log N, 2\log\log N]$, we have that, since $t!\geq (t/e)^t$, 
\[R(A)\leq  \sum_{t\in I}\frac{\brac{\sum_{q\in \mathcal{Q}_A}\frac{1}{q}}^t}{t!}\leq \sum_{t\in I}\brac{\frac{e}{t}\sum_{q\in \mathcal{Q}_A}\frac{1}{q}}^t.\]
Since $(ex/t)^t$ is decreasing in $t$ for $x<t$, either $\sum_{q\in \mathcal{Q}_A}\frac{1}{q}\geq (1-\epsilon)\log\log N$ (and we are done), or the summand is decreasing in $t$, and hence we have
\[(\log N)^{-\epsilon/2}\leq R(A)\leq 2\log\log N\brac{\frac{\sum_{q\in \mathcal{Q}_A}\frac{1}{q}}{(1-\epsilon)e^{-1}\log\log N}}^{(1-\epsilon)\log\log N}.\]
The claimed bound follows, using the fact that $e^{-\frac{\epsilon}{2(1-\epsilon)}}\geq 1-\epsilon$ for $\epsilon\in (0,1/2)$, choosing $N$ large enough such that $(2\log\log N)^{2/\log\log N}\leq 1+\epsilon^2$, say. 
\end{proof}
A weaker form of the following lemma is implicit in the proof of \cite[Proposition 3]{Cr2003}. For any finite $A\subset \bbn$ and prime power $q\in\mathcal{Q}_A$ we define\label{def-raq}
\[R(A;q) = \sum_{n\in A_q}\frac{q}{n}.\]
\begin{lemma}\label{lem-usingq}
There is a constant $c>0$ such that the following holds. Let $N\geq M\geq N^{1/2}$ be sufficiently large, and suppose that $1\leq k \leq c\log\log N$. Suppose that $A\subset [M,N]$ is a set of integers such that $\omega(n)\leq (\log N)^{1/k}$ for all $n\in A$. 

For all $q$ such that $R(A;q)\geq (\log N)^{-1/2}$ there exists $d$ such that
\begin{enumerate}
\item $qd > M\exp(-(\log N)^{1-1/k})$,
\item $\omega(d)\leq \tfrac{5}{\log k}\log\log N$, and
\item \[\sum_{\substack{n\in A_q\\qd\mid n\\ (qd,n/qd)=1}}\frac{qd}{n}\gg \frac{R(A;q)}{(\log N)^{2/k}}.\]
\end{enumerate}
\end{lemma}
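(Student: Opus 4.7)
The plan is to build $d$ iteratively, appending prime powers one at a time in a greedy fashion. Set $s_0 := \lceil 5\log\log N/\log k\rceil$ as the target bound on $\omega(d)$ and $L := (\log N)^{1/k}$. Starting from $d_0 = 1$, at each stage $j$ I would have a divisor $d_j = r_1 r_2 \cdots r_j$, a product of pairwise coprime prime powers each coprime to $q$, and track the restricted sum
\[
R^{(j)} := \sum_{\substack{n\in A_q\\ qd_j\mid n\\ (qd_j,\,n/qd_j)=1}} \frac{qd_j}{n}.
\]
The aim is to maintain the invariant $R^{(j)} \ge R(A;q)\,(\log N)^{-2j/(k s_0)}$, so that after at most $s_0$ steps the accumulated loss matches the slack $(\log N)^{-2/k}$ in condition~(3).

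For the inductive step, the key averaging identity is
\[
\sum_{r} R^{(j+1)}(r) \;=\; \sum_{\substack{n\in A_q\\ qd_j\mid n\,\text{unitarily}}} \frac{qd_j}{n}\cdot s\!\left(\tfrac{n}{qd_j}\right),
\]
where $r$ ranges over prime powers coprime to $qd_j$ and $s(m)=\sum_{p^a\|m}p^a$ denotes the sum of the prime-power factors of $m$. Using $s(m)\ge m^{1/\omega(m)}$ together with $\omega(m)\le L$ and $n\ge M$ gives a lower bound of the shape $\sum_r R^{(j+1)}(r)\gtrsim R^{(j)}\cdot(M/qd_j)^{1/L}$. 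To ensure that $qd_s$ eventually exceeds the threshold $M\exp(-(\log N)^{1-1/k})$ required by condition~(1), I would further restrict the sum to prime powers $r$ exceeding a threshold $T_j$ chosen so that the geometric growth $\prod_{j\le s_0} T_j \ge (M/q)\exp(-(\log N)^{1-1/k})$ is automatic.

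Pigeon-holing over $r\ge T_j$ then extracts some $r_{j+1}$ with $R^{(j+1)} \ge R^{(j)}/C$, where the per-step loss $C$ must satisfy $C^{s_0}\le(\log N)^{2/k}$, forcing $C \lesssim (\log N)^{2/(ks_0)} = k^{2/(5k)}$ — remarkably close to~$1$. The hard part will be achieving this tight per-step loss. A naive pigeon-hole over all active prime powers costs a factor equal to the number of such prime powers, which is far more than what the budget allows. The remedy, as I would implement it, is to combine a Maclaurin-type bound on elementary symmetric polynomials of $(q_1,\dots,q_{t-1})$ for each $n=q\prod q_i$ (this reflects the hypothesis $\omega(n)\le L$ and says that a ``typical'' choice of $r$ captures roughly a $1/L$ share of the relevant mass) with the size restriction $r\ge T_j$, reducing the effective count of competing $r$'s at each step to $L^{O(1)}$ and hence the per-step loss to $k^{O(1/k)}$. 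The iteration halts at some $s\le s_0$ once $qd_s\ge M\exp(-(\log N)^{1-1/k})$, and I would set $d=d_s$; by construction $\omega(d)=s\le s_0$ and $R^{(s)}\gg R(A;q)/(\log N)^{2/k}$, as required.
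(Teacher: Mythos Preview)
Your iterative scheme has a fundamental budget problem. You need the per-step loss $C$ to satisfy $C^{s_0}\le(\log N)^{2/k}$ with $s_0\asymp(\log\log N)/\log k$, which forces $C\le k^{2/(5k)}=1+O((\log k)/k)$. No pigeonhole argument can achieve a loss this close to $1$: as soon as two distinct candidates for $r$ carry comparable mass you already lose a factor $2$, which blows the budget after a handful of steps. Your proposed remedy does not rescue this. Even granting that each $n$ supports at most $L=(\log N)^{1/k}$ prime powers, the set of prime powers appearing across all $n$ can be huge, and a Maclaurin-type inequality on the symmetric functions of the $q_i$ for a \emph{single} $n$ says nothing about concentration of mass on a single $r$ across the whole sum. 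At best your sketch yields per-step loss $L^{O(1)}=(\log N)^{O(1/k)}$, which over $s_0$ steps compounds to $(\log N)^{O((\log\log N)/(k\log k))}$, far exceeding the allowed $(\log N)^{2/k}$ since $\log k\le\log\log\log N+O(1)$ under the hypothesis $k\le c\log\log N$.

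The paper sidesteps this entirely by performing a \emph{single} pigeonhole over all candidate $d$'s simultaneously rather than building $d$ one prime power at a time. Set $y=\exp((\log N)^{1-2/k})$ and let $D$ consist of those $d$ with every prime-power unitary divisor exceeding $y$ and with $qd\in(M\exp(-(\log N)^{1-1/k}),N]$. For each $n\in A_q$, stripping from $n/q$ all prime powers $\le y$ removes a factor of at most $y^{\omega(n)}\le\exp((\log N)^{1-1/k})$, so what remains is some $d\in D$ with $qd\mid n$ unitarily; hence $R(A;q)\le\sum_{d\in D}\frac{1}{d}\sum_{qd\| n}\frac{qd}{n}$. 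Mertens gives $\sum_{d\in D}\frac{1}{d}\ll\log N/\log y=(\log N)^{2/k}$, and a Rankin-type estimate with parameter $k^{\omega(d)}$ shows the sub-sum over $d$ with $\omega(d)>\tfrac{5}{\log k}\log\log N$ is negligible. One pigeonhole then produces the required $d$ with exactly the loss $(\log N)^{2/k}$ in condition~(3). The key difference is that the total weight $\sum_{d\in D}1/d$ is small, so a single pigeonhole suffices; spreading the same total loss over $s_0$ pigeonhole steps is what makes your approach infeasible.
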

\begin{proof}
Fix some $q$ with $R(A;q)\geq (\log N)^{-1/2}$. Let $D$ be the set of all $d$ such that if $p$ is a prime and $p^r \| d$ then 
\[p^r>y=\exp((\log N)^{1-2/k})\]
and 
\[qd\in (M\exp(-(\log N)^{1-1/k}),N].\]
We first claim that every $n\in A_q$ is divisible by some $qd$ with $d\in D$, such that $(qd,n/qd)=1$. This can be done greedily, just removing from $n/q$ all those prime power divisors $p^r\| n/q$ such that $p^r\leq y$, which removes at most 
\[y^{\omega(n)}\leq  \exp((\log N)^{1-1/k}).\]
We can therefore bound
\[R(A;q) \leq \sum_{d\in D}\frac{1}{d}\sum_{\substack{n\in A_q\\ qd\mid n\\ (qd,n/qd)=1}}\frac{qd}{n}.\]

We will control the contribution from those $d$ with $\omega(d)>\omega_0= \frac{5}{\log k}\log\log N$ with the trivial bound
\[\sum_{\substack{n\in A_q\\ qd\mid n\\ (qd,n/qd)=1}}\frac{qd}{n} \leq \sum_{\substack{n\leq N\\ qd\mid n}}\frac{qd}{n}\ll \log N\]
and Mertens' bound \eqref{mertens1}. Together these imply
\begin{align*}
\sum_{\substack{d\in D\\ \omega(d)>\omega_0}}\frac{1}{d}\sum_{\substack{n\in A_q\\ qd\mid n}}\frac{qd}{n}
&\ll \log N\sum_{\substack{d\\ p^r\| d\implies y<p^r\leq N\\ \omega(d)\geq \omega_0}} \frac{1}{d}\\
&\ll
k^{-\omega_0}\log N\sum_{\substack{d\\ p^r\| d\implies y<p^r\leq N}} \frac{k^{\omega(d)}}{d}\\
&\ll C_1^kk^{-\omega_0}\log N\prod_{y<p\leq N}(1+\frac{k}{p-1}) \\
 &\leq k^{-\omega_0}\log N\brac{C_2\frac{\log N}{\log y}}^{k}
\end{align*}
for some absolute constants $C_1,C_2>0$. Recalling the definitions of $y$ and $\omega_0$, this is
\[\leq C_2^kk^{-\omega_0}(\log N)^3\leq 1/\log N,\]
say, for $N$ sufficiently large. It follows that
\[\tfrac{1}{2}R(A;q)\leq \sum_{\substack{d\in D\\ \omega(d)\leq \omega_0}}\frac{1}{d}\sum_{\substack{n\in A_q\\ qd\mid n\\ (qd,n/qd)=1}}\frac{qd}{n} .\]
The result follows since 
\[\sum_{d\in D}\frac{1}{d}\leq \sum_{\substack{d\\ p^r\| d\implies y<p^r\leq N}}\frac{1}{d} \ll \prod_{y<p\leq N}\brac{1-\frac{1}{p-1}}^{-1}\ll \frac{\log N}{\log y}\ll (\log N)^{2/k}.\]
\end{proof}

\section{An iterative procedure}
To prove Proposition~\ref{th-techmain} we need to find some suitable $A'\subset A$ which satisfies the hypotheses of Proposition~\ref{prop-fourier} (for some suitable $k$). This is achieved by the following proposition, assuming various technical conditions, which we can arrange to hold via an iterative procedure. The following proposition is analogous to \cite[Proposition 3]{Cr2003}, and the basic structure of the proof is the same, but there are several technical refinements that are necessary in our proof.

\begin{proposition}\label{prop-tech}
Suppose $N$ is sufficiently large and $N\geq M\geq N^{1/2}$, and suppose that $A\subset [M,N]$ is a set of integers such that
\[\tfrac{99}{100}\log\log N\leq \omega(n)\leq  2\log\log N\quad\textrm{for all}\quad n\in A,\]
\[R(A)\geq (\log N)^{-1/101}\]
and, for all $q\in \mathcal{Q}_A$,
\[R(A;q) \geq (\log N)^{-1/100}.\]
Then either
\begin{enumerate}
\item there is some $B\subset A$ such that $R(B)\geq \tfrac{1}{3}R(A)$ and 
\[\sum_{q\in \mathcal{Q}_{B}}\frac{1}{q}\leq \frac{2}{3}\log\log N,\]
or
\item for any interval of length $\leq MN^{-2/(\log \log N)}$, either
\begin{enumerate}
\item \[\# \{ n\in A : \textrm{no element of }I\textrm{ is divisible by }n\}\geq M/\log N,\]
or
\item if $\mathcal{D}_I$ is the set of $q\in\mathcal{Q}_A$ such that
\[\#\{ n\in A_q: \textrm{no element of }I\textrm{ is divisible by }n\}<\frac{M}{2q(\log N)^{1/100}},\]
there is some $x\in I$ divisible by all $q\in\mathcal{D}_I$.
\end{enumerate}
\end{enumerate}
If $\sum_{q\in\mathcal{Q}_A}\frac{1}{q}\leq \frac{2}{3}\log\log N$ then case (2) is guaranteed. 
\end{proposition}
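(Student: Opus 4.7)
The plan is to argue by iterative pruning. I would maintain a subset $B \subseteq A$ determined by an active set of prime powers $\mathcal{P} \subseteq \mathcal{Q}_A$ via $B = \{n \in A : p^r \| n \implies p^r \in \mathcal{P}\}$, initialized with $\mathcal{P} = \mathcal{Q}_A$ and $B = A$. At each step I would test two stopping conditions: (a) if $\sum_{q \in \mathcal{P}} 1/q \leq \tfrac{2}{3}\log\log N$, stop and return $B$ as a witness for case (1), provided the running invariant $R(B) \geq \tfrac{1}{3}R(A)$ has been maintained; and (b) if case (2) already holds for $A$, stop. If neither fires, pick an interval $I$ of length $\leq MN^{-2/\log\log N}$ witnessing the failure of case (2), identify a prime power $q^\ast \in \mathcal{D}_I \cap \mathcal{P}$ to remove, and iterate.

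Two structural facts about a failing interval $I$ drive the argument. First, failure of (2a) means fewer than $M/\log N$ elements of $A$ have no multiple in $I$; combined with $\omega(n) \leq 2\log\log N$ and the quantitative threshold in the definition of $\mathcal{D}_I$, a double-counting of pairs $(n,q)$ with $n$ uncovered and $q \mid n$ gives
\[
\sum_{q \in \mathcal{Q}_A \setminus \mathcal{D}_I} \frac{1}{q} \ll \frac{\log\log N}{(\log N)^{99/100}},
\]
which is negligible against $\log\log N$. Second, failure of (2b) forces $\mathrm{lcm}(\mathcal{D}_I)$ to have no multiple in $I$; since $M \geq N^{1/2}$ and $|I| \leq MN^{-2/\log\log N}$, this yields $\log \mathrm{lcm}(\mathcal{D}_I) \gg \log N$. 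Together, $\mathcal{D}_I$ captures essentially all of the Mertens weight of $\mathcal{Q}_A$ and has substantial logarithmic size.

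To select $q^\ast$ I would invoke Lemma~\ref{lem-usingq}, which under the hypothesis $R(A;q) \geq (\log N)^{-1/100}$ produces a structured subdivisor $qd$ capturing most of $R(A;q)$. Combined with the large-lcm information, this lets me choose $q^\ast$ so that the $R$-mass removed from $B$ (i.e.\ the $R$-contribution of $B_{q^\ast}$) is small relative to the Mertens-weight progress $1/q^\ast$. The main obstacle is the amortized budgeting: cumulative $R$-loss across iterations must stay below $\tfrac{2}{3} R(A)$ while $\sum_{q \in \mathcal{P}} 1/q$ descends from roughly $\log\log N$ to below $\tfrac{2}{3}\log\log N$. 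Lemma~\ref{lem-basic} on divisor overlaps between distinct elements of $A$ should be essential here, converting per-step $R$-loss bounds into a tight cumulative bound; Lemma~\ref{lem-rtop} ensures that $\sum_{q \in \mathcal{P}} 1/q$ cannot plunge too low during the process, so the iteration makes genuine progress at each stage and must terminate.

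Finally, the additional clause --- that $\sum_{q\in\mathcal{Q}_A}\frac{1}{q} \leq \tfrac{2}{3}\log\log N$ forces case (2) --- follows by running the analysis at $t = 0$: were case (2) to fail, the first structural estimate would force $\sum_{q \in \mathcal{D}_I} \tfrac{1}{q} \leq \tfrac{2}{3}\log\log N + o(1)$, yet combining $\log \mathrm{lcm}(\mathcal{D}_I) \gg \log N$ with the hypothesis $R(A;q) \geq (\log N)^{-1/100}$ via Lemma~\ref{lem-usingq} should give a contradictory lower bound on $\sum_{q \in \mathcal{D}_I} \tfrac{1}{q}$, since the large lcm demands many prime powers of moderate size to sustain the per-$q$ mass guarantees.
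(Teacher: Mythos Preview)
Your iterative pruning scheme has a budget problem that, as stated, cannot be closed. Removing $q^\ast$ from $\mathcal{P}$ deletes from $B$ every $n$ with $q^\ast\| n$, costing $R(B;q^\ast)/q^\ast$ in $R$-mass while buying $1/q^\ast$ of Mertens progress. To drop $\sum_{q\in\mathcal{P}}1/q$ by $\tfrac{1}{3}\log\log N$ you must accumulate $\sum_t 1/q_t^\ast\gtrsim\tfrac{1}{3}\log\log N$, so the total $R$-loss is at least $\tfrac{1}{3}\log\log N$ times the minimum of $R(B;q^\ast)$ along the way. But the hypothesis forces $R(A;q)\geq(\log N)^{-1/100}$ for \emph{every} $q\in\mathcal{Q}_A$, while $R(A)$ may be only $(\log N)^{-1/101}$; already the first removals (before any $R(B;q)$ can shrink) cost $\gg(\log N)^{-1/100}$ per unit of progress, overrunning $\tfrac{2}{3}R(A)$ long before the target is reached. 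Lemma~\ref{lem-usingq} does not help you pick a cheap $q^\ast$: it produces, for a \emph{given} $q$, a large divisor $qd$ concentrating the mass of $A_q$, not a prime power with small $R(B;q)$. Separately, the claim $\log\mathrm{lcm}(\mathcal{D}_I)\gg\log N$ is unjustified --- failure of (2b) only gives $\mathrm{lcm}(\mathcal{D}_I)>|I|$, and $|I|$ has no lower bound --- and in any case a large $\mathrm{lcm}$ does not force $\sum_{q\in\mathcal{D}_I}1/q$ to be large (a single large prime suffices for a large $\mathrm{lcm}$), so your final-clause argument does not go through either.

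The paper's proof is not iterative. One fixes a single interval $I$ where (2) fails. For each $q\in\mathcal{E}_I\supseteq\mathcal{D}_I$, Lemma~\ref{lem-usingq} supplies $d_q$ with $qd_q>|I|$, which forces all $n\in A_I$ divisible by $qd_q$ to divide one common point $x_q\in I$. Applying Lemma~\ref{lem-rtop} to $\{n/qd_q:n\in(A_I)_q,\ qd_q\mid n\}$ gives $\sum_{r\mid x_q,\,r\in\mathcal{Q}_A}1/r\geq 0.35\log\log N$, while Lemma~\ref{lem-basic} caps the overlap between the divisor sets of any two distinct points of $I$ at $o(\log\log N)$. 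Since $\sum_{q\in\mathcal{Q}_A}1/q\leq(1+o(1))\log\log N$ by Mertens, there can be at most two distinct anchors $w_1,w_2$ among the $x_q$ (and only one if $\sum_q 1/q\leq\tfrac{2}{3}\log\log N$, which immediately yields (2b) and the final clause). Setting $A^{(i)}=\{n\in A:n\mid w_i\}$, each has $\sum_{q\in\mathcal{Q}_{A^{(i)}}}1/q\leq\tfrac{2}{3}\log\log N$; either one of them serves as $B$ in case (1), or the remainder $A^{(0)}=A\setminus(A^{(1)}\cup A^{(2)})$ has $R(A^{(0)})\geq\tfrac{1}{3}R(A)$, which is ruled out by a divisor-counting contradiction: enough elements of $A^{(0)}\cap A_I$ would divide both some $x\in I\setminus\{w_1,w_2\}$ and $w_1w_2$, hence divide $(x,w_1)(x,w_2)\leq N^2$, contradicting the divisor bound for integers $\leq N^2$. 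The structural idea you are missing is this reduction to at most two anchor points in $I$, not a prime-power-by-prime-power descent.
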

\begin{proof}
Let $I$ be any interval of length $\leq MN^{-2/(\log\log N)}$, and let $A_I$ be those $n\in A$ that divide some element of $I$. We may assume that $\abs{A\backslash A_I}< M/\log N$ (or else 2(a) holds), and we need to show that either there is some $x\in I$ divisible by all $q\in\mathcal{D}_I$, or the first case holds. 

Let $\mathcal{E}_I$ be the set of those $q\in\mathcal{Q}_A$ such that $R(A_I;q)> 1/2(\log N)^{1/100}$. For every $q\in \mathcal{D}_I$, by definition, 
\[R(A_I; q) \geq R(A;q) - \brac{\frac{M}{2q(\log N)^{1/100}}}\frac{q}{M}>\frac{1}{2(\log N)^{1/100}},\]
and hence in particular $\mathcal{D}_I\subset \mathcal{E}_I$.

For any $q\in \mathcal{E}_I$ we may therefore apply Lemma~\ref{lem-usingq} with $A$ replaced by $A_I$, and $k$ chosen such that $(\log N)^{1/k}=2\log\log N$. This produces some $d_q$ such that $qd_q>\abs{I}$ and $\omega(d_q)<\tfrac{1}{500}\log\log N$ (provided $N$ is sufficiently large), and
\[\sum_{\substack{n\in A_I\\ qd_q\mid n\\ (qd_q,n/qd_q)=1}}\frac{qd_q}{n}\gg \frac{1}{(\log N)^{1/100}(\log\log N)^2}.\]
By definition of $A_I$, every $n\in A_I$ with $qd_q\mid n$ must divide some $x\in I$ -- in fact, they must all divide the same $x\in I$ (call this $x_q\in I$, say), since all such $x$ are in particular divisible by $qd_q>\abs{I}$, which can divide at most one element in $I$. 

Let 
\[A_I^{(q)}=\{ n/qd_q : n\in A_I\textrm{ with }qd_q\mid n\textrm{ and }(qd_q,n/qd_q)=1\}\]
so that, assuming $N$ is sufficiently large, $R(A_I^{(q)})\geq (\log N)^{-1/99}$, say. We may therefore apply Lemma~\ref{lem-rtop} with $\epsilon=2/99$ (note that since $\omega(n)\geq \frac{99}{100}\log\log N$ for $n\in A$ and $\omega(d_q)<\frac{1}{500}\log\log N$, we must have $\omega(m)\geq \frac{97}{99}\log\log N$ for all $m\in A_I^{(q)}$). This implies that 
\[\sum_{r\in \mathcal{Q}_{A_I^{(q)}}}\frac{1}{r}\geq \frac{95}{99}e^{-1}\log\log N.\]
Trivially, $\mathcal{Q}_{A_I^{(q)}}\subset \mathcal{Q}_A$, and further by choice of $x_q$, all $r\in \mathcal{Q}_{A_I^{(q)}}$ divide $x_q$, and hence 
\[\sum_{\substack{r\mid x_q\\ r\in \mathcal{Q}_A}}\frac{1}{r}\geq \frac{95}{99}e^{-1}\log\log N\geq 0.35\log\log N.\]
For any two $n_1\neq n_2\in I$, we have
\[\sum_{q\mid (n_1,n_2)}\frac{1}{q}\ll \log\log\log N\leq 0.01\log\log N\]
for $N$ sufficiently large, by Lemma~\ref{lem-basic}. It follows that if $\sum_{q\in\mathcal{Q}_A}\frac{1}{q}\leq \frac{2}{3}\log\log N$ then there can be at most one such possible value for $x_q\in I$ as $q$ ranges over $\mathcal{E}_I$, and hence this common shared value of $x_q$ is an $x\in I$ divisible by all $q\in\mathcal{E}_I$, and hence certainly by all $q\in\mathcal{D}_I$, as required.
 
Furthermore, since $\sum_{q\in\mathcal{Q}_A}\frac{1}{q}\leq (1+o(1))\log\log N\leq 1.01\log\log N$, say, there must always be at most two distinct values of $x_q\in I$ as $q$ ranges over $\mathcal{E}_I$. If there is no $x\in I$ divisible by all $q\in\mathcal{D}_I$, there must be exactly two such values, say $w_1$ and $w_2$. 

Let $A^{(i)}=\{n\in A: n\mid w_i\}$ and $A^{(0)}=A\backslash (A^{(1)}\cup A^{(2)})$. Since every $q\in\mathcal{Q}_{A^{(1)}}$ divides $w_1$, 
\[\sum_{q\in \mathcal{Q}_{A^{(1)}}}\frac{1}{q}\leq \sum_{q\leq N}\frac{1}{q}- \sum_{q\mid w_2}\frac{1}{q}+ \sum_{q\mid (w_1,w_2)}\frac{1}{q}\leq (1-\tfrac{95}{99}e^{-1}+o(1))\log\log N.\]
For large enough $N$, the right-hand side is $\leq \frac{2}{3}\log\log N$, and similarly for $A^{(2)}$. Since $R(A^{(0)})+R(A^{(1)})+R(A^{(2)})\geq R(A)$, we are in the first case choosing $B=A^{(1)}$ or $B=A^{(2)}$, unless $R(A^{(0)})\geq R(A)/3$. In this latter case we will derive a contradiction.

Let $A'\subset A^{(0)}$ be the set of those $n\in A_I\cap A^{(0)}$ such that if $n\in A_q$ then $q\in\mathcal{E}_I$. By definition of $\mathcal{E}_I$ and Mertens' estimate \eqref{mertens1}, 
\[R(A^{(0)}\backslash A')\leq \frac{\abs{A\backslash A_I}}{M}+\sum_{q\in\mathcal{Q}_A\backslash \mathcal{E}_I}\frac{1}{q}R(A_I;q)\ll \frac{\log\log N}{(\log N)^{1/100}},\]
and so  in particular, since $R(A)\geq (\log N)^{-1/101}$, we have $R(A')\gg (\log N)^{-1/101}$.

In particular, $\abs{A'}\gg M/(\log N)^{-1/101}$. Therefore there must exist some $x\in I$ (necessarily $x\neq w_1$ and $x\neq w_2$ since $A'\subset A^{(0)}$) such that, if $A''=\{ n\in A' : n\mid x\}$, then
\[\abs{A''}\gg N^{2/\log\log N}(\log N)^{-1/101},\]
and hence $\abs{A''}\geq N^{3/2\log\log N}$, say. 

However, if $n\in A''$ then both $n\mid x$ and $n\mid w_1w_2$ (since every $q$ with $n\in A_q$ is in $\mathcal{E}_I$ and so divides either $w_1$ or $w_2$), and hence $n$ divides
\[(x,w_1w_2)\leq (x,w_1)(x,w_2)\leq \abs{x-w_1}\abs{x-w_2}\leq N^2.\]
Therefore the size of $A''$ is at most the number of divisors of some fixed integer $m\leq N^2$, which is at most $N^{(1+o(1))2\log 2/\log\log N}$ (see \cite[Theorem 2.11]{MV}, for example), and hence we have a contradiction for large enough $N$, since $2\log 2< 3/2$.
\end{proof}

Finally, to apply Proposition~\ref{prop-fourier} we need to prepare our set $A$ so that $R(A)\in [2/k-1/M,2/k)$ and $R(A;q)\geq (\log N)^{-1/100}$, say. Both can be achieved using a greedy `pruning' approach, adapted from the proof of \cite[Proposition 2]{Cr2003}.

\begin{lemma}\label{lem-pisqa}
Let $N$ be sufficiently large and $A\subset [1,N]$. There exists $B\subset A$ such that 
\[R(B)\geq  R(A)-\frac{1}{(\log N)^{1/200}}\]
and $R(B;q)\geq 2/(\log N)^{1/100}$ for all $q\in \mathcal{Q}_B$. 
\end{lemma}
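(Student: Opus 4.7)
The natural strategy is a greedy pruning procedure: iteratively strip out the prime powers for which $R(\,\cdot\,;q)$ is too small. Specifically, I would define a decreasing chain $B_0\supseteq B_1\supseteq\cdots$ by setting $B_0=A$ and, at each step, choosing some $q_i\in\mathcal{Q}_{B_i}$ with $R(B_i;q_i)<2/(\log N)^{1/100}$ and setting $B_{i+1}=B_i\setminus (B_i)_{q_i}$. When no such $q_i$ exists, terminate and output $B=B_i$, which by construction satisfies $R(B;q)\geq 2/(\log N)^{1/100}$ for every $q\in\mathcal{Q}_B$.

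The crucial observation is that once we delete $(B_i)_{q_i}$, the prime power $q_i$ no longer exactly divides any surviving element, so $q_i\notin\mathcal{Q}_{B_{i+1}}$; in particular $q_i$ cannot be chosen again at a later step. Since every prime power chosen is at most $N$ and each is chosen at most once, the procedure terminates in finitely many steps (and $A$ is anyway finite since $A\subset[1,N]$).

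To bound the loss, note that the removal at step $i$ decreases $R$ by exactly
\[R((B_i)_{q_i})=\frac{R(B_i;q_i)}{q_i}<\frac{2}{q_i(\log N)^{1/100}}.\]
Summing this over all steps, and using that the chosen $q_i$ are distinct prime powers $\leq N$, the total loss is at most
\[\sum_{q\leq N}\frac{2}{q(\log N)^{1/100}}\ll \frac{\log\log N}{(\log N)^{1/100}}\]
by Mertens' estimate \eqref{mertens1}. For $N$ sufficiently large this is $\leq (\log N)^{-1/200}$, because $(\log N)^{1/200}\geq \log\log N$ eventually. This gives the claimed bound $R(B)\geq R(A)-(\log N)^{-1/200}$.

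There is no serious obstacle here: the entire argument rests on the two simple observations that a prime power can be removed at most once and that the losses are weighted by $1/q$, making the total telescope into a Mertens sum. The only mild point requiring attention is the comparison of exponents $1/100$ versus $1/200$, which is precisely what absorbs the $\log\log N$ factor coming from Mertens.
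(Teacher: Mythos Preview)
Your proof is correct and is essentially identical to the paper's own argument: the same greedy pruning $B_{i+1}=B_i\setminus (B_i)_{q_i}$, the same observation that each prime power is removed at most once, and the same Mertens-based bound on the total loss. The only cosmetic difference is notation.
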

\begin{proof}
We construct a sequence of decreasing sets $A=A_0\supsetneq A_1\supsetneq\cdots \supsetneq A_i$ as follows. Given some $A_i$, if there is a prime power $q_i\in\mathcal{Q}_{A_i}$ such that 
\[R(A_i;q_i)< \frac{2}{(\log N)^{1/100}},\]
then we let $A_{i+1}=A_i\backslash (A_i)_{q_i}$. If no such $q_i$ exists then we halt the construction. This process must obviously terminate in some finite time (since some non-empty amount of $A_i$ is being removed at each step). Suppose that it halts at $A_j=B$, say. The amount lost from $R(A)$ at step $i$ is
\[\sum_{n\in (A_i)_{q_i}}\frac{1}{n}=\frac{1}{q_i}R(A_i;q_i)< \frac{2}{q_i(\log N)^{1/100}},\]
and furthermore each $q\leq N$ can appear as at most one such $q_i$, since after removing $(A_i)_{q_i}$ anything left in $A_i$ cannot have $q_i$ as a coprime divisor. It follows that
\[R(B)> R(A) -\frac{2}{(\log N)^{1/100}}\sum_{q\leq N}\frac{1}{q}\geq R(A)-\frac{1}{(\log N)^{1/200}},\]
since $\sum_{q\leq N}\frac{1}{q}\ll \log\log N$.
\end{proof}
\begin{lemma}\label{lem-pisq}
Suppose that $N$ is sufficiently large and $N\geq M\geq N^{1/2}$. Let $\alpha > 2/(\log N)^{1/200}$ and $A\subset [M,N]$ be a set of integers such that
\[R(A) \geq \alpha+\frac{1}{(\log N)^{1/200}}\]
and if $q\in\mathcal{Q}_A$ then $q\leq M/(\log N)^{1/100}$.

There is a subset $B\subset A$ such that $R(B)\in [\alpha-1/M,\alpha)$ and, for all $q\in \mathcal{Q}_B$, 
\[R(B;q)\geq \frac{1}{(\log N)^{1/100}}.\]
\end{lemma}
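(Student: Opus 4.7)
The plan is to first apply Lemma~\ref{lem-pisqa} to obtain a strengthened intermediate set $B_0\subseteq A$, and then whittle $B_0$ down by single-element removals to land in the narrow target window for $R(B)$, arranging each removal so that the lower bound on $R(B;q)$ is preserved.

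Explicitly, I first apply Lemma~\ref{lem-pisqa} to get $B_0\subseteq A$ with $R(B_0)\geq R(A)-(\log N)^{-1/200}\geq \alpha$ and $R(B_0;q)\geq 2(\log N)^{-1/100}$ for every $q\in\mathcal{Q}_{B_0}$. Call $n\in B$ \emph{safe} (relative to the current set $B$) if for every $q\in\mathcal{Q}_B$ with $n\in B_q$ one has $R(B;q)-q/n\geq (\log N)^{-1/100}$; equivalently, deleting $n$ would preserve $R(B;q)\geq (\log N)^{-1/100}$. Since $q\leq M/(\log N)^{1/100}$ and $n\geq M$ give $q/n\leq (\log N)^{-1/100}$, every element of $B_0$ is initially safe. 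Starting from $B=B_0$, I iterate: while $R(B)\geq\alpha$, pick any safe $n\in B$ and replace $B$ by $B\setminus\{n\}$. By construction this maintains $R(B;q)\geq (\log N)^{-1/100}$ for all $q\in\mathcal{Q}_B$ throughout, since any $q$ whose per-$q$ sum changed did so by exactly $q/n$, which the safety condition was designed to absorb.

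The main obstacle is showing that as long as $R(B)\geq\alpha$ there is always some safe element to remove, so the procedure can continue until $R(B)<\alpha$. Suppose otherwise: every $n\in B$ lies in some $B_q$ with $R(B;q)<(\log N)^{-1/100}+q/n\leq 2(\log N)^{-1/100}$. Writing $\mathcal{Q}'$ for the set of such $q$, this gives $B\subseteq\bigcup_{q\in\mathcal{Q}'} B_q$, so
\[R(B)\leq \sum_{q\in\mathcal{Q}'}\frac{R(B;q)}{q}< \frac{2}{(\log N)^{1/100}}\sum_{q\leq N}\frac{1}{q}\ll \frac{\log\log N}{(\log N)^{1/100}}\]
by Mertens' estimate~\eqref{mertens1}, the sum restricted to prime powers. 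For $N$ sufficiently large this is smaller than $2(\log N)^{-1/200}<\alpha$, contradicting $R(B)\geq\alpha$. Hence the iteration terminates with $R(B)<\alpha$; since the final removal decreased $R$ by at most $1/M$, we get $R(B)\in[\alpha-1/M,\alpha)$, and the invariant delivers $R(B;q)\geq (\log N)^{-1/100}$ for every $q\in\mathcal{Q}_B$, completing the proof.
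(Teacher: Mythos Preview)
Your proof is correct and closely related to the paper's, but the execution differs in one key step. Both proofs begin by applying Lemma~\ref{lem-pisqa} once to pass to a subset with $R(\cdot;q)\geq 2(\log N)^{-1/100}$, and both then remove elements one at a time while maintaining the invariant $R(B;q)\geq (\log N)^{-1/100}$. The difference is in how you guarantee that a removable (``safe'') element always exists while $R(B)\geq\alpha$. The paper does this by re-applying Lemma~\ref{lem-pisqa} to the current set $D$ at each step, producing a non-empty subset $B'\subset D$ with $R(B';q)\geq 2(\log N)^{-1/100}$; any $x\in B'$ is then automatically safe, since $R(D\setminus\{x\};q)\geq R(B';q)-q/M\geq (\log N)^{-1/100}$. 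You instead argue by contradiction: if no safe element exists, every $n\in B$ lies in some $B_q$ with $R(B;q)<2(\log N)^{-1/100}$, and summing over these $q$ via Mertens' estimate forces $R(B)\ll (\log\log N)(\log N)^{-1/100}<\alpha$. This is essentially the proof of Lemma~\ref{lem-pisqa} inlined, so your route trades modularity for directness; the paper's version keeps the pruning lemma as a black box, while yours makes the Mertens bound do the work explicitly at each step.
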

\begin{proof}
We first apply Lemma~\ref{lem-pisqa} to produce some $A'\subset A$ such that $R(A')\geq \alpha$ and $R(A';q) \geq 2/(\log N)^{1/100}$ for all $q\in\mathcal{Q}_{A'}$. 

We now argue that whenever $D$ is such that $R(D)\geq \alpha$ and $R(D;q) \geq (\log N)^{-1/100}$ for all $q\in \mathcal{Q}_D$ there exists some $x\in D$ such that $R(D\backslash \{x\};q)\geq (\log N)^{-1/100}$ for all $q\in\mathcal{Q}_{D}$. Given this, the lemma immediately follows, since we can continue removing such elements from $A'$ one at time until $R(B)$ falls in the required interval. 

To see why the above fact holds, apply Lemma~\ref{lem-pisqa} to obtain some $B\subset D$ (such that $R(B)\geq (\log N)^{-1/200}$, and hence in particular $B$ is non-empty), and let $x$ be any element of $B$. If $x\not\in D_q$ then by definition $R(D\backslash\{x\};q)=R(D;q)\geq (\log N)^{-1/100}$. If $x\in D_q$ then $x\in B_q$, and so
\[R(D\backslash\{x\};q)\geq R(B;q)-\frac{q}{x}\geq \frac{2}{(\log N)^{1/100}}-\frac{q}{M}\geq \frac{1}{(\log N)^{1/100}}\]
as required.
\end{proof}

We now have everything we need to prove Proposition~\ref{th-techmain}. The idea is, after pruning the set into a suitable form by repeated applications of Lemma~\ref{lem-pisq}, an application of Proposition~\ref{prop-tech} ensures that the hypotheses of Proposition~\ref{prop-fourier} are satisfied, which in turn delivers a solution to $R(S)=1/d$ for some suitable $d$ as required.

\begin{proof}[Proof of Proposition~\ref{th-techmain}]
Let $M=N^{1-1/\log\log N}$ and $d_i = \lceil y \rceil +i$. By repeated applications of Lemma~\ref{lem-pisq} we can find a sequence $A\supset A_0\supset A_1\supset\cdots \supset A_{t}$, where $d_t=\lceil z/4\rceil-1$, such that
\[R(A_i)\in [2/d_i-1/M,2/d_i)\quad\textrm{ and }\quad R(A_i;q)\geq (\log N)^{-1/100}\textrm{ for all }q\in \mathcal{Q}_{A_i}.\]
(Note that the hypotheses of Lemma~\ref{lem-pisq} continue to hold since 
\[\frac{2}{d_i}-\frac{1}{M}\geq \frac{2}{d_{i}+1}+\frac{1}{(\log N)^{1/200}}\geq \frac{3}{(\log N)^{1/200}}\]
for all $0\leq i\leq t$.) Let $0\leq j\leq t$ be minimal such that there is a multiple of $d_j$ in $A_j$. Such a $j$ exists by assumption, since every $n\in A$ is divisible by some $d\in[y,z/4)$. 

Suppose first that case (2) of Proposition~\ref{prop-tech} holds for $A_j$. The hypotheses of Proposition~\ref{prop-fourier} are now met with $k=d_j$, $\eta=1/2(\log N)^{1/100}$, and $K=MN^{-2/\log \log N}$. This yields some $S\subset A'\subset A$ such that $R(S)=1/d_j$ as required.

Otherwise, Proposition~\ref{prop-tech} yields some $B\subset A_j$ such that 
\[R(B)\geq 2/3d_j-1/M\geq 1/2d_j+(\log N)^{-1/200}\]
and where $\sum_{q\in\mathcal{Q}_B}\frac{1}{q}\leq \frac{2}{3}\log\log N$. Let $e_i = 4d_j+i$ and, once again, repeatedly apply Lemma~\ref{lem-pisq} to find a sequence $B\supset B_0\supset \cdots\supset B_r$, where $e_r=\lfloor z\rfloor$, such that
\[R(B_i)\in [2/e_i-1/M,2/e_i)\quad\textrm{ and }\quad R(B_i;q)\geq (\log N)^{-1/100}\textrm{ for all }q\in \mathcal{Q}_{B_i}.\]
By minimality of $j$, no $d\in [y,d_j)$ divides any element of $A_j$, and hence every $n\in A_j$ is divisible by some $e\in [4d_j,z]$. In particular, there must exist some $0\leq s\leq r$ such that $B_s$ contains a multiple of $e_s$. Furthermore, since $\sum_{q\in \mathcal{Q}_{B_s}}\frac{1}{q}\leq \sum_{q\in\mathcal{Q}_B}\frac{1}{q}\leq \frac{2}{3}\log\log N$ we must be in the second case of Proposition~\ref{prop-tech}. The hypotheses of Proposition~\ref{prop-fourier} are now met with $k=e_s$ and $\eta,K$ as above, and thus there is some $S\subset B_j\subset A$ such that $R(S)=1/e_s$. 
\end{proof}

\appendix

\section{Pomerance's lower bound}

Recall that $\lambda(N)$ is the maximum of $\sum_{n\in A}\frac{1}{n}$ as $A$ ranges over those $A\subset \{1,\ldots,N\}$ with no subsets $S\subset A$ such that $\sum_{n\in S}\frac{1}{n}=1$. In this appendix we present an observation of Pomerance, as related in Croot's PhD thesis \cite{Crootphd}, which yields the best lower bound for $\lambda(N)$ that we know of.

\begin{theorem}
\[\lambda(N) \gg (\log\log N)^2.\]
\end{theorem}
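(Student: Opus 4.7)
The plan is to exhibit an explicit set $A \subseteq \{1,\ldots,N\}$ with $R(A) \gg (\log\log N)^2$ that contains no nonempty subset $S$ with $R(S) = 1$. Following the hint, but with a small refinement needed to make the $p$-adic argument below bite, I would take
\[
A = \{n \leq N : n = pm,\ p\ \text{prime},\ p \geq p_0,\ 1 \leq m \leq g(p)\},
\]
where $g(x) = \lfloor \log x/\log\log x\rfloor$ and $p_0$ is a large absolute constant. Because $m \leq g(p) < p$, every prime factor of $m$ is strictly less than $p$, so $p = P^+(n)$ (the largest prime factor) and the representation $n = pm$ is unique.

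The lower bound $R(A) \gg (\log\log N)^2$ is then a standard Mertens-type computation: by uniqueness, $R(A) = \sum_{p\geq p_0} \frac{1}{p}\sum_{m\leq g(p),\,pm\leq N}\frac{1}{m}$, and the inner sum is $\sim \log g(p) \sim \log\log p$ in the bulk range $pg(p) \leq N$, so partial summation using~\eqref{mertens1} yields $R(A) \gg \sum_{p\leq N}\frac{\log\log p}{p} \sim \tfrac{1}{2}(\log\log N)^2$.

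The heart of the argument is showing no nonempty $S \subseteq A$ can satisfy $R(S)=1$. Given such an $S$, I would write each $n \in S$ as $n = p_n m_n$ with $p_n = P^+(n)$, let $p^* = \max\{p_n : n\in S\} \geq p_0$, put $M = \{m_n : p_n = p^*\} \subseteq [1,g(p^*)]$, and set $\sigma = \sum_{m\in M}1/m$. Since $n \leq p_n g(p_n) \leq p^* g(p^*) < (p^*)^2$ for every $n \in S$, we have $v_{p^*}(n) \in\{0,1\}$, with $v_{p^*}(n)=1$ iff $p_n = p^*$. Taking $p^*$-adic valuations in $R(S)=1$, the contribution from $\{n : p_n\neq p^*\}$ has $v_{p^*} \geq 0$ while the contribution from $\{n : p_n = p^*\}$ equals $\sigma/p^*$, of $p^*$-adic valuation $-1+v_{p^*}(\sigma)$; matching $v_{p^*}(1)=0$ forces $v_{p^*}(\sigma) \geq 1$. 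Writing $\sigma = N_0/D$ in lowest terms with $D\mid L:=\mathrm{lcm}(1,\ldots,\lfloor g(p^*)\rfloor)$ then forces $p^*\mid N_0$, so $|N_0|\geq p^*$. On the other hand, Chebyshev's estimate $\log L \leq 2g(p^*)$ gives $L \leq (p^*)^{2/\log\log p^*}$, and $|\sigma| \ll \log\log p^*$, so
\[
|N_0| \leq L\cdot |\sigma| \ll (p^*)^{2/\log\log p^*}\log\log p^*,
\]
which is strictly less than $p^*$ once $p_0$ (and hence $p^*$) is sufficiently large. This is the desired contradiction.

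The hard part will be calibrating $g$: it must be large enough that $\sum_{m\leq g(p)}1/m \sim \log\log p$ (so both factors of $\log\log N$ appear in $R(A)$), yet small enough that $\mathrm{lcm}(1,\ldots,g(p))$ stays $p^{o(1)}$ (so the final $p$-adic inequality bites). The literal reading $m\leq\log n$ of the hint ``$p\geq n/\log n$'' would correspond to $g(p) = \log p$ with $L\asymp p$ by the prime number theorem, for which the $p$-adic step fails by a factor of $\log\log p$; the refinement $g(p) = \log p/\log\log p$ is the simplest choice that threads both needles while preserving the asymptotic size of $R(A)$.
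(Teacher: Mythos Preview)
Your proof is correct and follows essentially the same route as the paper's: construct $A$ from integers $n=pm$ with $p=P^+(n)$ and $m$ small, bound $R(A)$ below via Mertens, and rule out $R(S)=1$ by a $p$-adic/divisibility argument at the largest prime appearing.

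The one point worth noting is your final paragraph. The paper does not take $g(p)=\log p$; it takes $m<(\log p)/C$ for a large absolute constant $C$ (this is exactly the freedom hidden in the $\gg$ of the hint ``$p\gg n/\log n$''). With that constant in play, Chebyshev gives
\[
L=\mathrm{lcm}(1,\ldots,\lfloor(\log p)/C\rfloor)\le e^{c(\log p)/C}=p^{c/C},
\]
and choosing $C>c$ already forces $L\cdot\sigma\le p^{c/C}\cdot O(\log\log p)<p$, which is the required contradiction. So the $p$-adic step does \emph{not} fail, and your refinement to $g(p)=\log p/\log\log p$, while perfectly valid, is unnecessary; the simpler constant adjustment suffices and is what the paper does.
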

\begin{proof}
Let $C\geq 1$ be some absolute constant, to be chosen later, and let $A$ be the set of all those $n\in \{1,\ldots,N\}$ such that, if $p$ is the largest prime divisor of $n$, then $p\log p> Cn$. Considering the contribution to the sum from all those $n$ with a fixed largest prime divisor separately, we see that 
\begin{align*}
\sum_{n\in A}\frac{1}{n}
&\geq \sum_{p\leq N/\log N}\frac{1}{p}\sum_{m\leq \log p/C}\frac{1}{m}\\
&\gg \sum_{p\leq N/\log N}\frac{\log\log p}{p}\\
&\gg (\log\log N)^2.
\end{align*}
It remains to prove that there are no distinct $n_1,\ldots,n_k\in A$ such that 
\[\frac{1}{n_1}+\cdots+\frac{1}{n_k}=1.\]
Suppose otherwise, and fix such $n_1,\ldots,n_k$. Let $p$ be the largest prime which divides any of $n_1,\ldots,n_k$, and suppose (without loss of generality) $n_1=pm_1<\cdots<n_l=pm_l$ are all those $n_i$ divisible by $p$. Then
\[\frac{1}{p}\brac{\frac{1}{m_1}+\cdots+\frac{1}{m_l}}=1-\frac{1}{n_{l+1}}-\cdots-\frac{1}{n_k},\]
where the right-hand side is some fraction whose denominator is not divisible by $p$. Therefore $p$ must divide the numerator of $\frac{1}{m_1}+\cdots+\frac{1}{m_l}$, and hence $p$ divides
\[[m_1,\ldots,m_l]\brac{\frac{1}{m_1}+\cdots+\frac{1}{m_l}},\]
where $[m_1,\ldots,m_l]$ is the lowest common multiple. Chebyshev's estimate shows that $[1,\ldots,m]\leq e^{O(m)}$ for all large enough $m$, and hence
\[p\leq [1,\ldots,m_l]\brac{1+\frac{1}{2}+\cdots+\frac{1}{m_l}}\ll (\log m_l) e^{O(m_l)}\leq e^{O(m_l)}.\]
It follows that $\log p\leq O(m_l)$, and so $n_l=pm_l\gg p\log p$. By construction, however, $p$ is the largest prime divisor of $n_l$, and hence $p\log p>Cn_l$, which is a contradiction for a suitable choice of $C$.
\end{proof}

\section{Formalisation of the proof}\label{app:form}
\smallskip
\begin{center}by \textsc{Thomas F. Bloom} and \textsc{Bhavik Mehta}\end{center}
\medskip

All proofs contained in this paper, in particular the proofs of Theorem~\ref{th-dens}, have been formally verified using the Lean proof assistant \cite{MouraLean}, in joint work with Bhavik Mehta. This verification is complete, in that all prerequisites have also been verified, meaning that the proofs of the main results of this paper have been formally checked down to an axiomatic level. The Lean code for this verification is freely available at \url{https://github.com/b-mehta/unit-fractions}. We use many prerequisite results from the Lean mathematical library \mathlib\ \cite{mathlib}, a central repository of commonly used mathematical results and definitions which is developed by a large volunteer community. This repository can be explored at \url{https://leanprover-community.github.io/mathlib_docs/index.html}.

\subsection{The main results}
We first present the actual Lean statements which have been formally proved; in some Lean projects it is a non-trivial task to verify that the Lean theorems do indeed correspond to the natural language theorems, and that one has actually proved what was intended. Fortunately, the relatively elementary nature of the statements of Theorems~\ref{th-dens} and \ref{th-quant} make this straightforward in this instance. 

The following is the Lean statement of Theorem~\ref{th-dens}:
\begin{lstlisting}
theorem unit_fractions_upper_density (A : set ℕ) : 
    upper_density A > 0 → 
    ∃ (S : finset ℕ), S ⊆ A ∧ ∑ n in S, (1 / n : ℚ) = 1
\end{lstlisting}
This is proved on line 1261 of \texttt{final\textunderscore results.lean}. This statement is very close to its natural language analogue, after one checks that \texttt{upper\textunderscore density} means what it should. The statements \lstinline{(A : set ℕ)} and \lstinline{(S : finset ℕ)} are type declarations, informing Lean that $A$ and $S$ are a set and finite set respectively of natural numbers. 

Note that we require further type information (called a `coercion') when we write \lstinline{(1 / n : ℚ)} instead of simply \lstinline{1/n} -- this is because since \lstinline{n} has type \lstinline{ℕ}, by default Lean will interpret \lstinline{1/n} as having type \lstinline{ℕ} also, performing a `natural number division' where \lstinline{1/n} is 0 for $n > 1$. To avoid this we use an explicit coercion to $\mathbb{Q}$ so that Lean knows we mean the rational $1/n$. 

The following is the Lean statement of Theorem~\ref{th-quant} (which is proved on line 2042 of \texttt{final\textunderscore results.lean}). 
\begin{lstlisting}
theorem unit_fractions_upper_log_density : ∃ C : ℝ,
  ∀ᶠ (N : ℕ) in at_top, ∀ A ⊆ Icc 1 N,
  C * log(log(log N)) / log(log N) * log N ≤ ∑ n in A, (1 / n : ℚ) 
    → ∃ S ⊆ A, ∑ n in S, (1 / n : ℚ) = 1
\end{lstlisting}
This statement requires a little more interpretation. The statement \lstinline{∀ᶠ (N : ℕ) in at_top} is a Lean formalisation of the statement `for all sufficiently large $N\in\mathbb{N}$'. We write \lstinline{Icc 1 N} for $[1,N]\cap \mathbb{N}$ (the \lstinline{Icc} stands for `Interval closed closed'). Otherwise the statement is as in Theorem~\ref{th-quant}. 

Of course, every proposition and lemma in this paper also has a corresponding formal statement. Indeed, the process of formalisation made it natural to divide a single proof in this paper into multiple smaller claims, so the number of lemma statements is much greater in the formalisation. To give another example of what the translation from natural language mathematics to Lean looks like, the following is the formally verified Lean version of Lemma~\ref{lem-rtop} (line 1659 of \texttt{aux\textunderscore lemmas.lean}).

\begin{lstlisting}
lemma rec_qsum_lower_bound (ε : ℝ) (hε1 : 0 < ε) (hε2 : ε < 1/2) :
  ∀ᶠ (N : ℕ) in at_top, ∀ A : finset ℕ,
  log N ^ (-ε / 2) ≤ rec_sum A → 
  (∀ n ∈ A, (1 - ε) * log (log N) ≤ (ω n : ℝ)) → 
  (∀ n ∈ A, (ω n : ℝ) ≤ 2 * log (log N)) → 
  (1 - 2*ε) * exp (-1) * log (log N) ≤ 
        ∑ q in ppowers_in_set A, (1 / q : ℝ)
\end{lstlisting}

Again, the reader should verify for their satisfaction that this does indeed state the same thing as Lemma~\ref{lem-rtop}. The code \lstinline{rec_sum A} is what is called $R(A)$ in the paper, and \lstinline{ppowers_in_set A} is what we have called $\mathcal{Q}_A$. The coercion \lstinline{(ω n : ℝ)} is required since \lstinline{ω n} is defined as a function from $\mathbb{N}\to\mathbb{N}$, and we wish to compare it in an inequality with an element of $\mathbb{R}$.
\subsection{The proof}

In total the proof is spread out over five files with a total of 9255 lines of code. The total amount of Lean code written for this project is larger than this, since there were several classic elementary number theoretic results (for example Mertens' estimates and Chebyshev's estimate) that were also required. These more foundational results will be included in \texttt{mathlib}, the central repository of Lean results. 

The formal proofs follow very similar lines to the proofs as presented in this paper, although of course are greatly expanded in length, as every claim, no matter how obvious or elementary, must be explicitly written and checked. The formal verification of some of these `obvious' facts was perhaps the most tedious part of the formalisation process. In particular, the `asymptotic' type of quantitative estimates frequently implicitly used in this paper, such as the fact that 

\[\frac{1}{\log N}+\frac{1}{2(\log N)^{1/100}}\brac{\frac{501}{500}\log\log N}\leq \frac{1}{6(\log N)^{1/101}},\]
for all sufficiently large $N$, are obvious after a moment of thought to any mathematically mature human reader, but required a lengthy formal proof.

One difference between this paper and the formalisation is that the `algorithmic' style or argument frequently used in this paper, such as that in the proof of Lemma~\ref{lem-pisqa}, was converted into a more formal inductive scheme. Although in principle Lean can formalise a proof phrased as an algorithm that terminates in finite time, checking an inductive hypothesis is much simpler and easier to structure. All of the algorithmic style proofs in this paper were translated into suitable inductive analogues (although in some cases the best inductive hypothesis to use for the simplest proof required some thought).
\subsection{The process}

The formalisation began in January 2022 and concluded in July 2022. At the beginning of the formalisation, the first author had no experience with Lean at all, and learnt Lean (or at least a sufficient subset of Lean) through the formalising process. 

A very helpful intermediate step in the formalisation was the production of a `blueprint' of the paper, which contained expanded versions of all statements and proofs used in this paper, including many `obvious' steps that were omitted as is customary in the mathematical literature. Some proofs were also reorganised with a view to what would be the simplest order to formalise things in. This part of the process required no knowledge of Lean, but was a necessary prerequisite to allow for a smooth translation from natural language proofs to Lean proofs. The blueprint also had an accompanying diagram with all dependencies indicated, which made it easy to organise the project and see what was still to be done. To give an idea of the level of complexity of a formalisation of this length, the final dependendency graph for this project is shown in Figure~\ref{fig1}.

\begin{figure}
\includegraphics[scale=0.2]{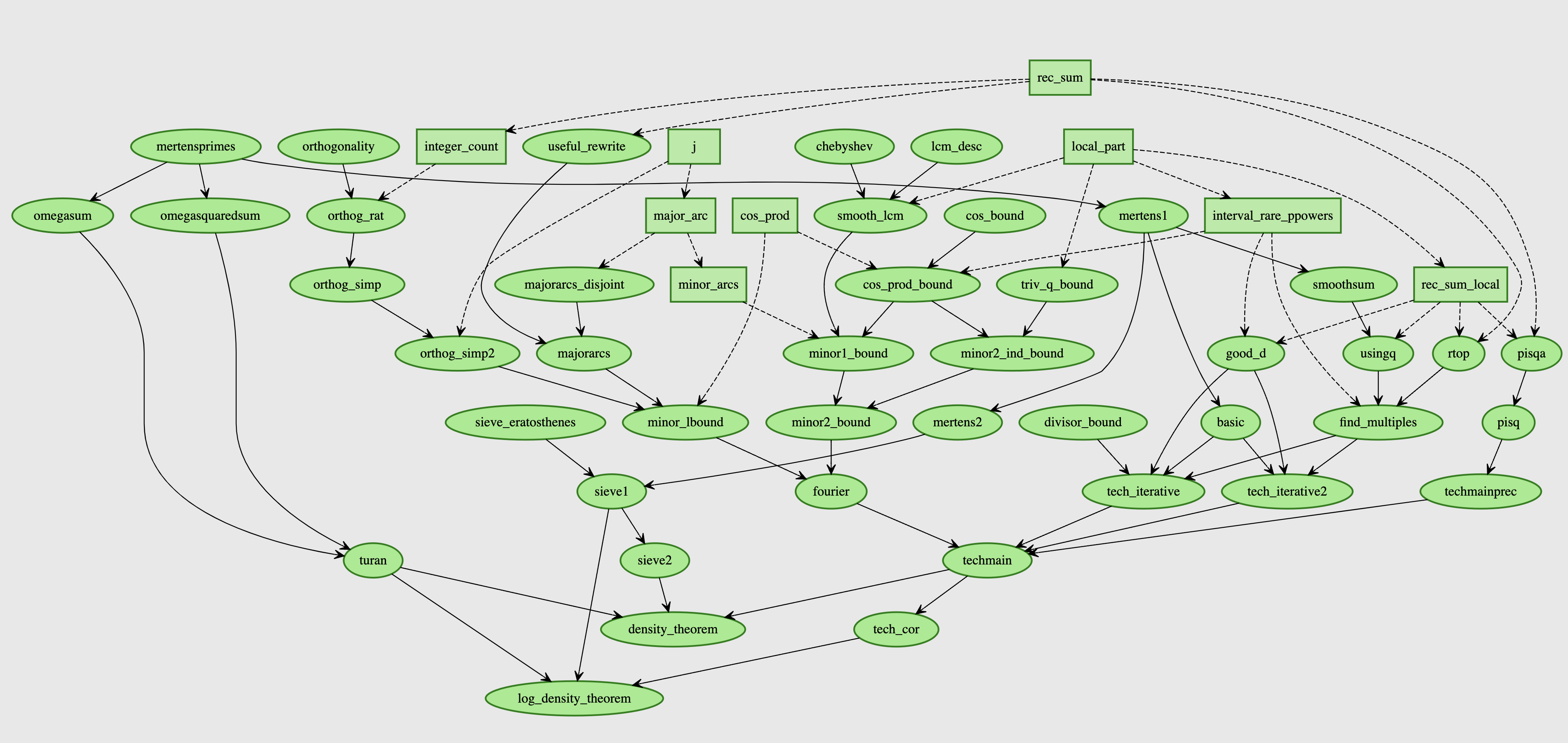}
\caption{The completed dependency graph for the formalisation.}\label{fig1}
\end{figure}

This blueprint is available at \url{https://b-mehta.github.io/unit-fractions/blueprint/index.html}. The template for this blueprint was created by Patrick Massot, and was also used for other Lean formalisation projects, such as the Liquid Tensor project.\footnote{\url{https://github.com/leanprover-community/lean-liquid}}

\subsection{Reflections} This formalisation is a first in several respects: it is the first recent analytic number theory result to be formally verified; the first instance of the circle method; the first solution to a long-standing problem of Erd\H{o}s.  Part of the motivation for this formalisation was as a proof of concept: the Lean proof assistant and accompanying \mathlib\ is advanced enough to make feasible the fast formalisation of new research results in mathematics, on the same timescale as the production of the `human-readable' paper. Of course, this was made feasible by the relatively elementary and self-contained nature of the mathematics involved. Nonetheless, we believe that this arrangement, with a formal certificate of validation accompanying the human version of the paper, is a sign of things to come.


\begin{thebibliography}{0}

\bibitem{Crootphd}
Croot, E. S.: Unit Fractions [PhD thesis, The University of Georgia, 2000]. \texttt{https://people.math.gatech.edu/{{\raise.17ex\hbox{$\scriptstyle\sim$}}}ecroot/thesis.pdf}

\bibitem{Cr2003}
Croot, E. S.: On a coloring conjecture about unit fractions. Ann. of Math. (2) 157, 545-556 (2003)

\bibitem{ErGr1980}
Erd\H{o}s, P. and Graham, R. L.: Old and new problems and results in combinatorial number theory. Enseign. Math. 30-44 (1980)

\bibitem{Gr2013}
Graham, R. L.: Paul Erd\H{o}s and Egyptian fractions. In: Erd\H{o}s centennial, pp. 289–309, János Bolyai Math. Soc., Budapest (2013)

\bibitem{MV}
Montgomery, H. L. and Vaughan, R. C.: Multiplicative Number Theory I. Classical Theory. Cambridge University Press, Cambridge (2007)

\bibitem{Su2007}
Sun, Z.-W.: A conjecture on sets of positive integers with positive upper density. Number Theory mailing list\newline \texttt{https://listserv.nodak.edu/cgi-bin/wa.exe?A2=NMBRTHRY;a8772b3.0701{\&}S=} (2007). 

\bibitem{mathlib}
The mathlib community: The Lean Mathematical Library. Proceedings of the 9th ACM SIGPLAN International Conference on Certified Programs and Proof pp. 367-381, CPP 2020.

\bibitem{MouraLean}
de Moura, L. M., Kong S., Avigad, J., van Doorn, F., and von Raumer, J.: The Lean Theorem Prover (System Description): CADE (2015)
\end{thebibliography}
\end{document}